\documentclass[12pt,a4paper]{amsart}
\usepackage[english]{babel}
\usepackage{latexsym}
\usepackage{amssymb}
\usepackage{amscd}

\usepackage[all]{xy}
\usepackage[cp850]{inputenc}
\usepackage[mathscr]{eucal}
\newcommand{\OP}{\mathfrak}

\theoremstyle{plain}

\newtheorem*{prob*}{Problem}
\newtheorem{theorem}{Theorem}[section]
\newtheorem{proposition}{Proposition}[section]
\newtheorem{cor}{Corollary}[section]
\newtheorem{lemma}{Lemma}[section]
\theoremstyle{remark}

\newcommand{\R}{\mathbb{R}}
\newcommand{\N}{\mathbb{N}}
\newcommand{\K}{\mathbb{K}}

\newcommand{\ELP}{{\sf E}\hspace{-1pt}{\sf L}\hspace{-1pt}{\sf P }}

\newcommand\dist{\mathop{\mathrm{dist}}\nolimits}
\newcommand{\KP}{{\sf K}\hspace{-1pt}{\sf P}}

\newcommand{\adef}{\begin{defn}}
\newcommand{\zdef}{\end{defn}}
\newtheorem{defn}[theorem]{Definition}

\newcommand{\To}{\longrightarrow}

\def\Ext{\operatorname{Ext}}

\def\supp{\operatorname{supp}}
\def\PB{\operatorname{PB}}
\def\PO{\operatorname{PO}}

\def\Ext{\operatorname{Ext}}

\DeclareMathOperator{\bilipemb}{LipEmb}

\newcommand{\Z}{\sf Z}

\newcommand{\aproof}{\begin{proof}}
\newcommand{\zproof}{\end{proof}}

\makeatletter
\def\@tocline#1#2#3#4#5#6#7{\relax
    \ifnum #1>\c@tocdepth 
    \else
    \par \addpenalty\@secpenalty\addvspace{#2}%
    \begingroup \hyphenpenalty\@M
    \@ifempty{#4}{%
        \@tempdima\csname r@tocindent\number#1\endcsname\relax
    }{%
        \@tempdima#4\relax
    }%
    \parindent\z@ \leftskip#3\relax \advance\leftskip\@tempdima\relax
    \rightskip\@pnumwidth plus4em \parfillskip-\@pnumwidth
    #5\leavevmode\hskip-\@tempdima
    \ifcase #1
    \or\or \hskip 1em \or \hskip 2em \else \hskip 3em \fi%
    #6\nobreak\relax
    \hfill\hbox to\@pnumwidth{\@tocpagenum{#7}}\par
    \nobreak
    \endgroup
    \fi}
\makeatother

\title{A twisted Hilbert space not isomorphic to its dual}

\author{J.M.F. Castillo}
\address{Universidad de Extremadura, Instituto de Matem\'aticas Imuex, Avenida de Elvas, 06011-Badajoz, Spain} \email{castillo@unex.es}

\author{W.H.G. Corr\^{e}a }
\address{Departamento de Matem\'atica, Instituto de Ciencias Matem\'ticas e de Computa\c{a}o, Universidade de S\u{a}o Paulo, Avenida Trabalhador S\u{a}o-carlense, 400 - Centro, CEP: 13566-590, S\u{a}o Carlos, SP, Brazil.}
\email{\texttt{willhans@icmc.usp.br}}

\thanks{2020 AMS Class. number. 46M18, 46B20, 46B70, 46C99}

\thanks{Keywords. Twisted Hilbert spaces, centralizers, coneable family}

\thanks{The research of the second (maybe) author has been supported by Project PID2023-146505NB-C21 funded by MCIU/AEI/10.13039/5011000110331/FEDER/UE and Project IB24002 funded by la Junta de Extremadura. Willian Corr\^{e}a was supported by S\^{a}o Paulo Research Foundation (FAPESP), grants 2023/06973-2 and 2023/12916-1, and National Council for Scientific and Technological Development - CNPq - Brasil, grant 304990/2023-0.}

\begin{document}

\maketitle
\begin{abstract} We show: 1) The existence of the first twisted Hilbert space that is not isomorphic to its dual; this solves a problem posed by Cabello in  [\emph{Nonlinear centralizers in homology}, Math. Ann. 358 (2014), no. 3-4, 779--798]. 2) The existence of a large coneable family of relatively incomparable such examples, improving the coneable family obtained in [W.H. Corr\^{e}a, S. Dantas, D.L. Rodr\'iguez-Vidanes, \emph{Twisted Hilbert spaces defined by Lipschitz embeddings}, Israel J. of Mathematics, to appear]. 3) The existence of quasilinear maps between Hilbert spaces not isomorphic to Kalton centralizers; which solves another question of Cabello. 4) The existence of a large family of mutually incomparable elements in the ordered set of twisted Hilbert exact sequences. This complements earlier results in
[J.M.F. Castillo, W. Cuellar, V. Ferenczi, Y. Moreno, \emph{Complex structures on twisted Hilbert spaces},  Israel J. Math. 222 (2017) 787--814] --where it was proved that the ordered set did not have a first element-- and [F. Cabello S\'anchez, J.M.F. Castillo, W.H.G. Corr\^{e}a, V. Ferenczi, R. Garc\'ia, \emph{On the $\Ext^2$-problem in Hilbert spaces}, J. Funct. Anal. 280 (2021) 108863] --where two incomparable elements were obtained.\end{abstract}

\section{Introduction}

The reader is invited to go to the Preliminaries section for all unexplained terms appearing in this introduction.
In this paper we answer negatively the problem posed by Cabello \cite{cabeann}: Is every twisted Hilbert space isomorphic to its dual? Relevant information about this question was provided by Kalton \cite{kcomplex} with the construction of complex twisted Hilbert spaces $Z_2(\alpha)$ real isomorphic but not complex isomorphic to their duals. The exact sequence associated to each $Z_2(\alpha)$ was shown in \cite[Proposition 6.10]{cfg} to be \emph{singular} and several \emph{ad hoc} manipulations in \cite[Theorem 6.1]{cd} generate an entire cone of singular twisted Hilbert spaces real isomorphic but not complex isomorphic to their duals.

Up to now, most of the known twisted Hilbert sequences were obtained using \emph{centralizers} which, by standard results \cite{kaltdiff,cabeann,ccc}, must be isomorphic to their dual sequences (and, consequently, the twisted Hilbert space must be isomorphic to its dual). This suggests another problem posed, to the best of our knowledge, by Cabello: Is every twisted Hilbert sequence isomorphic to a twisted Hilbert sequence generated by a centralizer? We will show the existence of twisted Hilbert spaces isomorphic to their duals but such that their associated twisted Hilbert sequence is not isomorphic to the dual sequence and, therefore, cannot be isomorphic to a sequence generated by a centralizer. Using those constructions we will obtain a large \emph{coneable} family of relatively incomparable singular twisted Hilbert spaces not isomorphic to their duals. Recall that in \cite{cd} the first coneable family of mutually incomparable singular twisted Hilbert spaces was obtained. We apply those ideas to the study of the ordered set $\Ext(\ell_2, \ell_2)$  of twisted Hilbert exact sequences, showing that the coneable family above is formed by mutually incomparable elements. This improves two previous results in the area: That the ordered set $\Ext(\ell_2, \ell_2)$  has no minimum \cite{ccfmcomplex} and that the order is not total \cite{ext2}.\smallskip

Since the problems are related to the homological structure of twisted Hilbert spaces, let us begin with a short review of the topic, explaining the terminology and the relevance of the questions. The reader is invited to consult \cite{hmbst} for a sound background on exact sequences, quasilinear maps and the examples and homological constructions we will use.

\section{Preliminaries}
\subsection{Exact sequences of Banach spaces}

An exact sequence of Banach spaces is a diagram $\xymatrixcolsep{0.5cm}\xymatrix{0\ar[r]& Y  \ar[r]& Z \ar[r]& X \ar[r]&0}$
formed by Banach spaces and linear bounded operators such that the image of each operator coincides with the kernel of the next one. Two exact sequences $\xymatrixcolsep{0.5cm}\xymatrix{0\ar[r]& Y  \ar[r]& Z \ar[r]& X \ar[r]&0}$
and $\xymatrixcolsep{0.5cm}\xymatrix{0\ar[r]& Y  \ar[r]& Z' \ar[r]& X \ar[r]&0}$ are called equivalent when there exists a linear bounded operator $\beta:Z\To Z'$ making a commutative diagram
$$\xymatrix{
0\ar[r]&Y\ar@{=}[d]\ar[r]& Z\ar[d]^\beta\ar[r]& X\ar[r]\ar@{=}[d] & 0\\
0\ar[r]&Y'\ar[r]& Z'\ar[r]& X'\ar[r]& 0}$$

The space $Z$ is called a twisted sum of $Y$ and $X$. The exact sequence $\xymatrixcolsep{0.5cm}\xymatrix{0\ar[r]& Y  \ar[r]& Z \ar[r]& X \ar[r]&0}$ is said to \emph{split} if it is equivalent to the trivial sequence $\xymatrixcolsep{0.5cm}\xymatrix{0\ar[r]& Y  \ar[r]& Y\oplus_\infty X \ar[r]& X \ar[r]&0}$. Two exact sequences $\xymatrixcolsep{0.5cm}\xymatrix{0\ar[r]& Y  \ar[r]& Z \ar[r]& X \ar[r]&0}$
and $\xymatrixcolsep{0.5cm}\xymatrix{0\ar[r]& Y'  \ar[r]& Z' \ar[r]& X' \ar[r]&0}$ are called \emph{isomorphically equivalent} if and only if there exist isomorphisms $\alpha, \beta, \gamma$ forming  a commutative diagram
$$\xymatrix{
0\ar[r]&Y\ar[d]_\alpha\ar[r]& Z\ar[d]^\beta\ar[r]& X\ar[r]\ar[d]^\gamma & 0\\
0\ar[r]&Y'\ar[r]& Z'\ar[r]& X'\ar[r]& 0}$$
Two exact sequences  $\xymatrixcolsep{0.5cm}\xymatrix{0\ar[r]& Y  \ar[r]& Z \ar[r]& X \ar[r]&0}$
and $\xymatrixcolsep{0.5cm}\xymatrix{0\ar[r]& Y  \ar[r]& Z' \ar[r]& X \ar[r]&0}$ can easily be isomorphically equivalent but not equivalent (see \cite{hmbst}).

\subsection{Quasilinear maps and centralizers}

A homogeneous map $F: X\To Y$ (i.e., a map such that $F(\lambda x)=\lambda F(x)$ for every $x\in X$ and every scalar $\lambda$) is called $1$-quasilinear (resp. quasilinear) if for some $C>0$ and all finite sets $x_1, \dots, x_n \in X$ one has
$$\Big\|F \Big(\sum x_i\Big ) -\sum F(x_i)\Big\|\leq C \sum \|x_i\|$$
(resp. the condition above is satisfied for $n=2$). The infimum of the constants $C$ above is denoted $Q_1(F)$ (resp. $Q(F)$). Given a $1$-quasilinear map we can construct the space $Y\oplus_F X = ( Y\times X, \|\cdot\|_F)$ where $\|(y,x)\|_F=\|y - F x\| + \|x\|$ is a quasi-norm equivalent to a norm. All this remains true for quasilinear maps, except that  $\|(\cdot, \cdot)\|_F$ is now a quasi-norm no longer necessarily equivalent to a norm. Kalton's theorem \cite{kalt} (see also \cite{hmbst}) establishes that all quasilinear maps on Hilbert spaces are $1$-quasilinear and  $Q(\cdot)$ and  $Q_1(\cdot)$ are equivalent. Thus, when $F: X\To Y$ is a $1$-quasilinear map, $Y\oplus_F X$ is a Banach space and there is a natural exact sequence $$\xymatrix{0\ar[r]& Y \ar[r]^-{\imath} & Y\oplus_F X \ar[r]^-{\pi}& X \ar[r]&0}$$ in which $\imath(y)=(y,0)$ and $\pi(y,x)=x$. It turns out that every exact sequence $\xymatrixcolsep{0.5cm}\xymatrix{0\ar[r]& Y  \ar[r]& Z \ar[r]& X \ar[r]&0}$
of Banach spaces is equivalent to a sequence $\xymatrixcolsep{0.5cm}\xymatrix{0\ar[r]& Y  \ar[r]& Y\oplus_F X \ar[r]& X \ar[r]&0}$
for some 1-quasilinear map $F$ and, in particular, $Z$ is isomorphic to some $Y\oplus_F X$, something we will sometimes depict in the form
$$\xymatrix{
0\ar[r] & Y \ar[r]& Z \ar[r]& X\ar[r]\ar@{-->}@/_2pc/[ll]_F &0}$$

Indeed, given $\xymatrixcolsep{0.5cm}\xymatrix{0\ar[r]& Y  \ar[r]& Z \ar[r]^Q& X \ar[r]&0}$, a quasilinear map $F$ as above can be obtained as follows: pick a homogeneous bounded selector $b$ for the quotient map $Q$
(i.e. $Qb(x)=x$ for all $x\in X$), then pick a linear (not necessarily continuous) selector $\ell$ for $Q$ and set $F = b - \ell$. The map $F$ is obviously not unique. However, two $1$-quasilinear maps $F, F'$ generating equivalent exact sequences are such that $F - F' = B+L$ where $B: X\To Y$ is homogeneous and bounded (a homogeneous map $B:X\To Y$ is bounded when there is a constant $C$ such that $\|Bx\|\leq C\|x\|$ for all $x\in X$) and $L:X \To Y$ is linear. If we define two $1$-quasilinear maps $F, G: X\To Y$ as \emph{equivalent}, denoted $F \equiv G$, when $F - G = B+L$ as above, it can be shown that $F\equiv G$ if and only if the associated exact sequences are equivalent. We say that a quasilinear map $F$ is \emph{trivial} when $F\equiv 0$; equivalently, when the associated exact sequence splits. If $ \mbox{Lin} (X,Y)$ denotes the space of linear (not necessarily bounded) maps $X\to Y$, then $F\equiv 0$ if and only if $\mathrm{dist}(F, \mbox{Lin} (X,Y))<\infty$. We shall repeatedly use the pushout and pullback constructions. The reader can find a consistent study of those constructions in \cite{hmbst}. What we need to know here is that the pushout (resp. pullback) construction come defined by the commutative diagrams
$$\xymatrix{
0\ar[r]&Y\ar[d]_\alpha\ar[r]& Z\ar[d]\ar[r]& X\ar[r]\ar@{=}[d] \ar@{-->}@/_2pc/[ll]_F & 0\\
0\ar[r]&Y'\ar[r]& \PO\ar[r]& X\ar[r]& 0} \quad \quad\quad \xymatrix{
0\ar[r]&Y\ar[r]& Z\ar[r]& X\ar[r]\ar@{-->}@/_2pc/[ll]_F  & 0\\
0\ar[r]&Y \ar@{=}[u]\ar[r]& \PB \ar[u]\ar[r]& X'\ar[r]\ar[u]_\gamma& 0}$$

In quasilinear terms, the pushout construction corresponds to composition $\alpha F$ on the left and the pullback construction to composition $F\gamma$ on the right. Isomorphically equivalent sequences can also be described using quasilinear maps: $F$ and $G$ are isomorphic (written $F\sim G$) if and only if the associated exact sequences are isomorphically equivalent \cite{hmbst}; i.e., when there exists isomorphisms $\alpha, \gamma$ such that $\alpha F \equiv G\gamma$; equivalently, there exists a linear bounded operator $\beta$ making the following diagram commutative
$$\xymatrix{
0\ar[r] & Y \ar[d]_\alpha \ar[r]& Y\oplus_{F} X \ar[d]_\beta\ar[r]& X\ar[d]^\gamma\ar[r]\ar@{-->}@/_2pc/[ll]_F &0 \\
0\ar[r] &Y' \ar[r]& Y' \oplus_{G} X' \ar[r]& X' \ar@{-->}@/^2pc/[ll]^G  \ar[r]&0 }$$

Let $X$ be a Banach space contained in a vector space $\Sigma$. A homogeneous map $\Omega: X \To \Sigma$ defined on an $L_\infty$-module $X$ is an $L_\infty$-centralizer when there is a constant $C>0$ such that for every $\xi\in L_\infty$ and every $x\in X$ the difference
$\Omega(\xi x) - \xi \Omega (x)\in X$ and
$$\|\Omega(\xi x) - \xi \Omega (x)\| \leq C \|\xi\|_\infty \|x\|.$$

It is unknown in which contexts centralizers have to be quasilinear (which is the property required so that the twisted sum space is quasi-Banach), while quasilinear maps are not necessarily $1$-linear (which is the property required so that the twisted sum space is Banach). However, $L_\infty$-centralizers defined on K\"othe spaces are quasilinear  \cite[Lemma 3.12.3]{hmbst}, and these are the most important centralizers since they emerge from the deep theory created by Kalton \cite{kaltdiff,kaltmem} connecting centralizers with analytic  families of $L_\infty$-modules in the following way: if $\mathbb D$ denotes the unit complex circle (or the unit complex strip $\mathbb S = \{z\in \mathbb C: 0< \mathrm{Re} z <1\}$ when dealing with pairs of Banach spaces), given a suitable family $(X_\omega)_{\omega\in \partial \mathbb D}$ (a pair $(X_0, X_1)$) of complex Banach spaces then the complex interpolation method yields for every $z \in \mathbb D$ (for every $z\in \mathbb S$) a map $\Omega_{z_0}$, called the associated differential, defined on the interpolated space $X_{z_0}$. The connections between all these items is as follows: there is a  Banach space $\mathscr F$ of holomorphic functions $\mathbb D \To \Sigma$ (resp. holomorphic functions $\mathbb S \To \Sigma$), where $\Sigma$ is now a vector space that contains all the spaces $X_\omega$, in such a way that the interpolated space $X_z$ is obtained as the image of $\mathscr F$ by the evaluation map $\delta_{z_0}: \mathscr F \To \Sigma$. When the family is just a pair $(X_0, X_1)$ then it is customary to write $(X_0, X_1)_{z_0}$ for the interpolated space $X_{z_0}$. The differential $\Omega_{z_0}$ is obtained as
$$\Omega_{z_0} = \delta_{z_0}'B_{z_0}$$ where $B_{z_0}: X_{z_0} \to \mathscr F$ is a homogeneous bounded selector for $\delta_{z_0}$ and $\delta_{z_0}': \mathscr F\To \Sigma$ is the
evaluation at $z_0$ of the derivative $\delta_{z_0}(f)= f'(z_0)$. In this context, two different choices of selectors $B_{z_0}$ provide equivalent differentials. When the spaces of the family are $L_\infty$-modules then the differential $\Omega_{z_0}$ is a centralizer and, moreover, for every centralizer $\Omega$ on a (suitable) K\"othe function space $X$ there exists an analytic family $(X_\omega)_{\omega\in \partial \mathbb D}$ of K\"othe function spaces such that $X=X_{z_0}$ and $\Omega\equiv \Omega_{z_0}$.\medskip

Consider, for instance, the complex interpolation pair $(\ell_\infty, \ell_1)$ of $\ell_\infty$-modules. If $z_0= 1/2$ then it is well-known that $(\ell_\infty, \ell_1)_{1/2} = \ell_2$ and in this case $B(x)(z) = \mbox{sgn}(x) \left|x\right|^{2z}$ works as bounded homogeneous selector, so that the associated differential is the centralizer
$$\KP(x) = 2x \log \frac{|x|}{\|x\|_2}.$$

\subsection{Twisted Hilbert spaces}
 A twisted Hilbert space is a Banach space $Z$ admitting a subspace isomorphic to a Hilbert space in such a way that the corresponding quotient space is also a Hilbert space. Or, in homological terms, the middle space in an exact sequence $\xymatrix{0\ar[r] &H \ar[r]& Z\ar[r]& H' \ar[r] &0 }$ in which both $H$ and $H'$ are Hilbert spaces. A twisted Hilbert space is said to be nontrivial if it is not isomorphic to a Hilbert space, which, in homological terms, can be formulated as: the exact sequence $\xymatrixcolsep{0.5cm}\xymatrix{
0\ar[r] &H \ar[r]& Z\ar[r]& H' \ar[r] &0 }$ that defines it does not split. The first two examples and, to some extent, the most important ones of non-trivial twisted Hilbert spaces are:\medskip

\textbf{The \ELP space}. Let $(F_n)$ be a sequence of quasi-linear maps $F_n: \ell_2^n \To \ell_2^{m(n)}$ with
$\lim_{n\to \infty}  m(n)=\infty$; $Q(F_n)=1$ and $\lim_{n\to \infty}  \dist\Big (F_n, \mathfrak L(\ell_2^n, \ell_2^{m(n)})\Big )=\infty$. Then the nontrivial twisted Hilbert space generated by this sequence is

$$\ELP[F_n] = \ell_2\Big (\mathbb N, \ell_2^n\oplus_{F_n} \ell_2^{m(n)}\Big ).$$

These constructions are just an existence result: if a sequence $(F_n)$ like that exists,
a nontrivial twisted Hilbert space $\ELP[F_n]$ exists. And, by local results that can be seen in \cite{hmbst}, if a nontrivial twisted Hilbert space exists, a sequence $(F_n)$ exists. The critical point is how to obtain such a sequence $(F_n)$. Enflo, Lindenstrauss and Pisier constructed in \cite{elp} one specific such sequence by mere brute force, essentially as follows. For each $n$ let $\phi_n, P_n : \ell_2^{3^n} \rightarrow \ell_2^{3^n}$ be given by $P_n(x, y, z) = (x, y, 0)$, $\phi_1(x, y, z) = \Big(0, 0, \frac{x \left|y\right|}{(\left|x\right|^2 + \left|y\right|^2)^{\frac{1}{2}}}\Big)$ and, for $n\geq 2$,
$$\phi_{n+1}(x, y, z) = \Big(\phi_n(x), \phi_n(y), \frac{P_n(x) \|P_n(y)\|_2}{(\|P_n(x)\|^2 + \|P_n(y)\|^2)^{\frac{1}{2}}} \Big).$$

We set $\ELP = \ell_2\Big(\N, \ell_2^{3^n} \oplus_{\phi_n} \ell_2^{3^n}\Big)$
and observe:
\begin{itemize}\item $\ELP$ is a twisted Hilbert space in the obvious way

$$\xymatrix{0\ar[r] &\ell_2\Big(\N, \ell_2^{3^n}\Big)  \ar[r]& \ell_2\Big(\N, \ell_2^{3^n} \oplus_{\phi_n} \ell_2^{3^n}\Big)\ar[r]& \ell_2\Big(\N, \ell_2^{3^n}\Big) \ar[r] &0 }$$

\item It is nontrivial because it can be shown \cite{elp} (see also \cite{castgonz}) that $\ell_2^{3^n}$ is no less than
$\sqrt{n}$-complemented in $\ell_2^{3^n} \oplus_{\phi_n} \ell_2^{3^n}$.

\item A quasilinear map associated to this sequence is $\Omega_{\ELP}= \ell_2\Big(\N, \phi_n\Big)$, with the  meaning: given a finitely supported sequence $(x_n)$, $$\Omega_{\ELP}(x) = \Big(\phi_n(x_n) \Big).$$
\end{itemize}

\textbf{The Kalton-Peck space $Z_2$ and $Z(\varphi)$ spaces.} The second example of twisted Hilbert space that is not a Hilbert space is $\ell_2\oplus_{\KP} \ell_2$. This is the celebrated Kalton-Peck $Z_2$ space \cite{kaltpeck} (see \cite{hmbst} to discover its remarkable properties) generated by the centralizer $\KP$, which will be called the Kalton-Peck map. The property we need to mention here is that the exact sequence
$\xymatrixcolsep{0.5cm}\xymatrix{0\ar[r]&\ell_2 \ar[r]& Z_2 \ar[r]& \ell_2 \ar[r]&0}$ is \emph{singular} and \emph{cosingular}, in the sense that the quotient map is a strictly singular operator and the embedding is a strictly cosingular operator. An exact sequence defined by a quasilinear map $\Omega$ is singular if and only if no restriction of $\Omega$ to an infinite dimensional subspace is trivial \cite{hmbst}. The $\KP$ centralizer belongs to the family of centralizers introduced in \cite{kaltpeck}: using a suitable Lipschitz function $\varphi : \R \rightarrow \R$ or $\varphi : \R \rightarrow \mathbb{C}$ define
$$\Omega_\varphi(x) = x \varphi\Big(- \log \frac{|x|}{\|x\|_2}\Big).$$
The twisted Hilbert space generated by this centralizer will be denoted $Z(\varphi)$. Observe that $Z_2$ is the space $Z(\varphi)$ obtained from the choice $\varphi(t)=t$ for $t\geq 0$.\medskip

Some specific choices of the Lipschitz function $\varphi$ will be of importance for this paper
\begin{itemize}
\item For a given real number $\alpha$, the choice  $\varphi_\alpha(t)= t^{1+i\alpha }$ produces the spaces $Z_2(\alpha)$ obtained in \cite{kcomplex}, for which Kalton shows that when $\alpha\neq \beta$ then $Z_2(\alpha)$ is real-isomorphic but not complex-isomorphic to $Z_2(\beta)$.
\item A function $\varphi$ will be called bi-Lipschitz if there exist constants $c,C>0$ such that $c|t-s|\leq |\varphi(t) - \varphi(s)|\leq C |t - s|$. It was shown in \cite{cd} that there is a coneable set $\mathcal C$ (i.e., it contains a convex cone generated by an infinite linearly independent set) of bi-Lipschitz functions satisfying additional conditions, which were used to obtain a family $\{Z(\varphi): \varphi\in \mathcal C\}$ of twisted Hilbert spaces having the following properties:
\begin{enumerate}
\item Each $Z(\varphi)$ is isomorphic to its dual since it is generated by a centralizer.
\item The exact sequences $\xymatrixcolsep{0.5cm}\xymatrix{0\ar[r]&\ell_2 \ar[r]& Z(\varphi) \ar[r]& \ell_2 \ar[r]&0}$ are singular and cosingular \cite[Proposition 3.3]{cd}.
\item Given $\varphi, \psi\in \mathcal C$, if $\varphi$ is not a multiple of $\psi$ then $Z(\varphi)$ is not a subspace (or a quotient) of $Z(\psi)$ \cite[Theorem 4.1]{cd}.
\end{enumerate}
\end{itemize}

Let us improve (3). Consider the following class: an operator $T: \ell_2\To X$ will be called \emph{almost-compact} if $Te_n\To 0$. The reason behind the name is explained now:

\begin{lemma} An operator $T: \ell_2\To X$ is almost-compact if and only if for every infinite $N\subset \N$ there exists an infinite subset $M\subset N$ such that the restriction $T|_{\ell_2(M)}$ is compact.
\end{lemma}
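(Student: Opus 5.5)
The plan is to prove the two implications separately. Both rest on one elementary fact: if $\sum_{n\in M}\|Te_n\|^2<\infty$, then $T|_{\ell_2(M)}$ is Hilbert--Schmidt-like and hence compact. Indeed, for $x=\sum_{n\in M} x_n e_n$ the Cauchy--Schwarz inequality gives
$$\Big\|\sum_{n\in M,\, n>k} x_nTe_n\Big\|\leq \|x\|_2\Big(\sum_{n\in M,\, n>k}\|Te_n\|^2\Big)^{1/2}.$$
So $T|_{\ell_2(M)}$ is the norm limit of the finite rank operators $x\mapsto \sum_{n\in M,\, n\leq k} x_nTe_n$, and is therefore compact.

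\emph{($\Rightarrow$)} Assume $Te_n\to 0$ and let $N\subset\N$ be infinite. Since $\|Te_n\|\to 0$ along $N$, we can choose inductively $n_1<n_2<\cdots$ in $N$ with $\|Te_{n_k}\|\leq 2^{-k}$. Put $M=\{n_k\}$. Then $\sum_{n\in M}\|Te_n\|^2<\infty$, and the fact above shows that $T|_{\ell_2(M)}$ is compact.

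\emph{($\Leftarrow$)} We argue by contradiction. Suppose $Te_n\not\to 0$. Then there are $\varepsilon>0$ and an infinite $N\subset\N$ with $\|Te_n\|\geq\varepsilon$ for every $n\in N$. By hypothesis there is an infinite $M\subset N$ with $T|_{\ell_2(M)}$ compact. However, $(e_n)_{n\in M}$ is weakly null in $\ell_2(M)$, and a compact operator maps weakly null sequences to norm null sequences. Hence $\|Te_n\|\to 0$ along $M$, contradicting $\|Te_n\|\geq\varepsilon$ on $M$.

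None of the steps is a real obstacle. The only point needing care is the compactness of the restriction in the forward direction, which the square-summable choice of subsequence together with the Cauchy--Schwarz estimate settles directly.
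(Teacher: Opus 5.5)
Your proof is correct and follows essentially the same route as the paper. For the forward direction you both pass to a subset $M$ along which $(\|Te_n\|)$ is square-summable and approximate by finite-rank truncations; for the converse you both use that a compact operator sends the weakly null sequence $(e_n)_{n\in M}$ to a norm-null one. The only difference is that the paper carries out the tail estimate on the adjoint $\imath^*T^*$ and invokes Schauder's theorem, whereas your Cauchy--Schwarz estimate works directly on $T|_{\ell_2(M)}$, which is slightly more economical.
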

\begin{proof} Suppose that $T$ is almost compact. We assume without loss of generality that $N=\N$. Since $T$ is almost compact there is an infinite subset $M\subset N$ such that $(\|Te_n\|)_{n\in M} \in \ell_2$; now, let $N(\varepsilon) \in \N$ be chosen such that $\left( \sum_{N(\varepsilon)}^\infty \|Te_n\|^2 \right)^{1/2}<\varepsilon$. Let $\imath: \ell_2(M) \To \ell_2(\N)$ be the canonical inclusion. To show that  $T|_{\ell_2(M)} = T\imath$ is compact we show that $(T|_{\ell_2(M)})^* = \imath^*T^*: X^* \To \ell_2(M)$ is compact. If $x^*\in B_{X^*}$ then
$$\|\imath^* T^* x^*\| = \Big( \sum_{m\in M} |\langle T^* x^*, e_m\rangle|^2\Big)^{1/2} = \Big( \sum_{m\in M} |\langle x^*, Te_m\rangle|^2\Big)^{1/2}
\leq \Big( \sum^{N(\varepsilon)} \|T e_m\|^2\Big)^{1/2}  + \varepsilon$$
which shows that $\imath^*T$ is approximable by the sequence $(\pi_NT)_N$ of finite rank operators, where $\pi_N: \ell_2(\N)\To \ell_2(M)$ is the projection onto the first $N$ elements of $M$.

The converse implication is immediate: since compact operators transform weakly null sequences into norm null sequences the condition in the Lemma
implies that every subsequence of $(\|Te_n\|)_n$ contains a subsequence convergent to $0$, hence the whole sequence is convergent to $0$.\end{proof}

The existence of almost-compact operators $\ell_2\To \ell_2$ that are not compact shows that almost-compact operators do not form an operator ideal. The following example has been provided by Manuel Gonz\'alez:\medskip

\noindent\textbf{Example.} Pick the orthonormal sequence $(y_n)$ given by $y_1 = e_1$ and $y_n = 2^{-n/2} e_{2^n} + \cdots + e_{2^{n+1} -1}$ for $n > 1$. Define $T : \ell_2 \To \ell_2$ in the form
$$Te_k = \left\{
           \begin{array}{ll}
             e_1, & \hbox{$k=1$;} \\
             2^{-n/2} e_{2^n}, & \hbox{$2^n \leq k< 2^{n+1}$.}
           \end{array}
         \right.$$
$T$ is a bounded operator that obviously satisfies $Te_n\To 0$ but, at the same time, $T$ is an  isomorphism on the closed subspace generated by the $y_n$'s since $Ty_n = e_n$.

\begin{proposition}\label{ss} Given $\varphi, \psi\in \mathcal C$, if $\varphi$ is not a multiple of $\psi$ then every operator $\tau: Z(\varphi) \To Z(\psi)$ has almost-compact restriction to $\ell_2$. Consequently, $Z(\varphi)$ is not a subspace of $Z(\psi)$.\end{proposition}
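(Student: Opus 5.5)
Suppose $\tau: Z(\varphi)\To Z(\psi)$ is an operator, write $\imath_\varphi:\ell_2\To Z(\varphi)$ for the canonical embedding, and assume, to reach a contradiction, that $\tau\imath_\varphi$ is not almost-compact. Then there are $\varepsilon>0$ and an infinite $N\subset\N$ with $\|\tau e_n\|\geq\varepsilon$ for $n\in N$.

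\textbf{Step 1: reduce to a composition with the quotient map.} Let $\pi_\psi: Z(\psi)\To\ell_2$ be the quotient map, which is strictly singular by property (2). The vectors $e_n$ form a weakly null basic sequence in $\ell_2\subset Z(\varphi)$, so $(\tau e_n)_{n\in N}$ is weakly null and bounded below. Passing to a subsequence, it is equivalent to a block sequence in $Z(\psi)$, since $Z(\psi)$ has a natural basis coming from $\ell_2\oplus\ell_2$. I then split into two cases.

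\textbf{Case A: $\pi_\psi\tau e_n\not\to 0$.} After a further subsequence, $\pi_\psi\tau$ is an isomorphism on $\overline{\mathrm{span}}\{e_n\}$, because $(\pi_\psi\tau e_n)$ is a weakly null seminormalized sequence in $\ell_2$, so a subsequence is equivalent to the unit vector basis. Restricted to this span, $\tau$ is then an isomorphism into $Z(\psi)$ whose image is not contained in the kernel. The idea is to use the lifting structure: on the span $\ell_2(M)$, the map $\pi_\psi\tau$ is an isomorphism onto a subspace $H\subset\ell_2$, and $\tau|_{\ell_2(M)}$ gives a bounded linear lift of $\pi_\psi$ on $H$. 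This means $\KP$-type map $\Omega_\psi$ is trivial on $H$, which contradicts the singularity of the sequence in property (2).

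\textbf{Case B: $\pi_\psi\tau e_n\to 0$.} Perturbing and passing to a subsequence, we may assume $\tau e_n$ lies essentially in $\ell_2\subset Z(\psi)$ and is equivalent to the unit basis there. The plan is now to compare twisting maps. Consider the restriction of the sequence $0\To\ell_2\To Z(\varphi)\To\ell_2\To0$ to $\ell_2(M)$, meaning the pullback along the inclusion of $\ell_2(M)$ into the quotient, whose middle space is $\pi_\varphi^{-1}(\ell_2(M))$. Using $\tau$, I would produce a commutative diagram in which $\tau$ maps $\ell_2(M)$ isomorphically onto a block subspace of $\ell_2$ in $Z(\psi)$. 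This gives $\alpha\,\Omega_\varphi|_{\ell_2(M)}\equiv\Omega_\psi\gamma$ for an isomorphism $\alpha$ and some operator $\gamma$, obtained by composing $\tau$ with $\pi_\psi$ on the lifts $(0,e_n)$. To see this, evaluate $\tau$ on the elements $(\Omega_\varphi x, x)$. Then I would invoke the incomparability argument of \cite[Theorem 4.1]{cd}. That argument shows that when $\varphi$ is not a multiple of $\psi$, no such relation holds on any block subspace: the centralizers $\Omega_\varphi,\Omega_\psi$ evaluated on normalized averages of $k$ blocks behave like $\varphi(\log k)$ versus $\psi(\log k)$ times the vector, and the bi-Lipschitz conditions on $\mathcal C$ prevent any linear correction. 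If $\gamma$ is not almost-compact we reach a contradiction directly; if it is, then $\Omega_\psi\gamma$ is trivial on a further subspace, forcing $\Omega_\varphi$ to be trivial on some $\ell_2(M')$, which contradicts singularity.

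The consequence then follows at once. An isomorphic embedding $Z(\varphi)\To Z(\psi)$ would restrict to an isomorphism on $\ell_2$, so $\|\tau e_n\|\geq c>0$, and the restriction would not be almost-compact.

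The main obstacle is Case B: extracting the equivalence $\alpha\Omega_\varphi\equiv\Omega_\psi\gamma$ on a subsequence from a mere operator $\tau$. I would first try to mimic the proof of \cite[Theorem 4.1]{cd} verbatim, replacing their isomorphism assumption by the weaker lower bound $\|\tau e_n\|\geq\varepsilon$ along $N$. That proof only tests $\tau$ on finitely supported averages of the $e_n$ and their lifts, so the lower bound should be enough.
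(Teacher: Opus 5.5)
Your Step 1, Case A and the set-up of Case B are fine. Case A is even shorter than you make it: $\pi_\psi$ would be an isomorphism on the infinite-dimensional subspace $\tau[\ell_2(M)]$, contradicting strict singularity. The sub-case of Case B in which $\gamma$ is almost-compact also closes. Pass to $M'$ with $\sum_{n\in M'}\|\gamma e_n\|<\infty$. By $1$-quasilinearity, $\Omega_\psi\gamma$ is then at finite distance from $x\mapsto\sum x_n\Omega_\psi(\gamma e_n)$ on $\ell_2(M')$, and a bounded left inverse of $\alpha$ transfers the triviality to $\Omega_\varphi|_{\ell_2(M')}$.

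The gap is the remaining sub-case, which carries the whole content of the proposition. There $\tau(e_n,0)=(a_n,0)$ and $\tau(0,e_n)=(d_n,f_n)$, with $(a_n)$ and $(f_n)$ seminormalized blocks. You must show that $\big\|\alpha\Omega_\varphi y+\sum y_nd_n-\Omega_\psi(\gamma y)\big\|\le\|\tau\|\,\|y\|_2$ for all finitely supported $y\in\ell_2(M)$ is impossible, and you only gesture at \cite[Theorem 4.1]{cd}. The heuristic you give for this step is not right.

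Only on the $Z(\varphi)$ side is $s_k=k^{-1/2}\sum_{n\le k}e_n$ an average of unit vectors, so that $\Omega_\varphi(s_k)=\varphi(\log\sqrt k)\,s_k$. On the $Z(\psi)$ side, write $f_n=\mu_nv_n$ with $\|v_n\|=1$. On $\supp f_n$ one gets $\Omega_\psi(\gamma s_k)=k^{-1/2}f_n\,\psi\big(u+\log\sqrt k+O(1)\big)$, where $u=-\log|v_n|$ varies from coordinate to coordinate. The linear correction supplied by $\tau$ is $d_n=\Omega_\psi(f_n)+O(1)=f_n\psi(u)+O(1)$. What survives is therefore $k^{-1/2}f_n\big(\psi(u+\log\sqrt k)-\psi(u)\big)$. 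This equals $\psi(\log\sqrt k)\,k^{-1/2}f_n$ up to a bounded error only when $\psi$ is quasi-additive (essentially the Kalton--Peck case $\psi(t)=ct$) or the blocks have uniformly bounded $u$. For the genuinely nonlinear bi-Lipschitz functions in $\mathcal C$, the comparison ``$\varphi(\log k)$ versus $\psi(\log k)$'' therefore needs a further ingredient. Candidates are the specific properties of $\mathcal C$ and the diagonal/symmetrization reduction to scalar-matrix operators, as in \cite{kcomplex,cd}. Your expectation that the proof of \cite[Theorem 4.1]{cd} ``should'' survive replacing an embedding by the lower bound $\|\tau e_n\|\ge\varepsilon$ is an unverified hope, not an argument.

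The paper does not attempt this comparison at all. It quotes the first line of the proof of \cite[Proposition 4.3]{cd}, which already gives, for an arbitrary operator $\tau:Z(\varphi)\to Z(\psi)$ and every subsequence, $\inf_k\|\tau(e_{n_k},0)\|=0$. Hence $\|\tau(e_n,0)\|\to0$, and the embedding statement follows exactly as in your last paragraph. So either cite that operator statement directly, which makes your case analysis unnecessary, or supply an actual proof that no such block relation exists for $\varphi,\psi\in\mathcal C$ that are not multiples of each other.
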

\begin{proof} From \cite[Proposition 4.3]{cd} (see the first line of the proof) we get that for every subsequence $(e_{n_k})$ of the canonical basis of $\ell_2$ we have $\inf_n \|\tau(e_{n_k}, 0)\| = 0$; therefore $\|Te_n\|\To 0$. This means that the restriction $\tau|_{\ell_2}$ must be almost-compact. The second assertion follows from that.
\end{proof}


Since the $Z(\varphi)$ spaces are isomorphic to their square by construction, $Z(\varphi)$ is not either a subspace or a quotient of $Z(\psi)\oplus \ell_2$ since this is a subspace of $Z(\psi)\oplus Z(\psi)$, in turn isomorphic to $(\phi)$. Moreover, $Z(\varphi)$ does not contain complemented copies of $\ell_2$ \cite[Lemma 1]{complex} and therefore $Z(\varphi)$ is not isomorphic to $Z(\varphi)\oplus \ell_2$. More specific properties of this family of spaces will be mentioned when required.

\section{Two problems of Cabello}

\subsection{Problem 1} Twisted Hilbert spaces generated from complex interpolation scales, i.e., twisted Hilbert spaces whose associated quasilinear map is a differential, are isomorphic to their duals \cite{ccc}. A delicate second reading shows that twisted Hilbert spaces generated by $L_\infty$-\emph{centralizers} are isomorphic to their duals \cite{cabeann}. All twisted Hilbert spaces known so far, with the exception of the \ELP space \cite{elp}, are obtained in this form. This motivates Cabello to formulate in \cite[p. 796]{cabeann} -- reproduced in \cite[Problem 1, Section 4.5]{free} -- the question: \emph{Is every twisted Hilbert space isomorphic to its dual?} The answer is no:

\begin{theorem} There exist twisted Hilbert spaces not isomorphic to their duals.\end{theorem}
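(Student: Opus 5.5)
The plan is to build an asymmetric twisted Hilbert space by gluing two incomparable spaces from the coneable family. Fix $\varphi,\psi\in\mathcal C$ with $\varphi$ not a multiple of $\psi$. Let $\Omega=\Omega_\varphi\oplus\Omega_\psi^{*}$ be the quasilinear map acting on $\ell_2\oplus\ell_2$, where the second coordinate uses the quasilinear map of the \emph{dual sequence} of some second space. A plain direct sum $Z(\varphi)\oplus Z(\psi)$ is self-dual, because each summand is generated by a centralizer. So I will instead mix a space with a quotient-type construction.

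Concretely, set
$$Z=Z(\varphi)\oplus W,$$
where $W$ is a twisted Hilbert space containing $Z(\psi)$ but with dual containing no copy of $Z(\psi)$. For $W$ I first try a pushout or pullback of the $Z(\psi)$ sequence along a suitable operator. For instance, take the pullback of $0\to\ell_2\to Z(\psi)\to\ell_2\to 0$ along an almost-compact, non-compact operator $T$ such as Gonz\'alez's example. Pullbacks dualize to pushouts, and Proposition~\ref{ss} controls which operators can exist.

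The core argument is the comparison of subspaces and quotients. Every operator from $Z(\varphi)$ into $Z(\psi)$ (and, dually, into the relevant quotients) is almost-compact on $\ell_2$, by Proposition~\ref{ss}. Hence, if $Z\simeq Z^*$, the copy of $Z(\varphi)$ inside $Z$ must be carried into $Z^*=Z(\varphi)^*\oplus W^*\simeq Z(\varphi)\oplus W^*$. I would then argue that its $\ell_2$-part cannot go almost-compactly into the $W^*$-component while remaining an isomorphism. This forces the existence of an isomorphic copy of the sequence, or of a non-almost-compact restriction, that contradicts the asymmetry designed into $W$.

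The main obstacle is constructing $W$ so that this asymmetry is provable. We need $W$ to contain a singular piece of type $\psi$ while $W^*$ contains only pieces of different type, for example only $\ell_2$ plus a $\varphi'$-type space with $\varphi'\in\mathcal C$ not proportional to $\psi$. Proving the required non-embedding with only Proposition~\ref{ss} and the facts that $Z(\varphi)\simeq Z(\varphi)^2$ and that $Z(\varphi)$ has no complemented $\ell_2$ is where I expect the real work. I would first try $W$ built from $\Omega_\psi$ composed on one side with a diagonal operator that kills a subsequence, so that subspaces see the $\psi$-twist fully but quotients see it only almost-compactly, and then check that dualizing swaps these roles.
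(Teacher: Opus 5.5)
Your proposal does not yet contain a proof. The space $W$ is never constructed, and the statement that carries all the content (that $W$ contains $Z(\psi)$ while $W^*$ does not) is left as the ``main obstacle''.

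Worse, the concrete candidates you suggest cannot produce any asymmetry:
\begin{itemize}
\item \emph{Diagonal operators.} If $D$ is a bounded diagonal operator, the definition of an $\ell_\infty$-centralizer says that $\Omega_\psi D - D\Omega_\psi$ is bounded. So pulling back and pushing out along $D$ give the same sequence. Since $D^*=D$ and $\Omega_\psi^*\equiv-\Omega_\psi$, the dual of $\Omega_\psi D$ is $\equiv -\Omega_\psi D$. The sequence is therefore isomorphically equivalent to its dual, and dualizing does not ``swap the roles''.
\item \emph{Gonz\'alez's operator.} Up to indexing, $T$ is the orthogonal projection onto $[y_n]$ followed by an isometry of $[y_n]$ onto $\ell_2(M)$, where $M=\{1,2,4,8,\dots\}$. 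Since $\Omega_\psi$ preserves supports, a direct computation gives $\ell_2\oplus_{\Omega_\psi T}\ell_2\simeq Z(\psi)\oplus\ell_2$, which is isomorphic to its dual.
\item \emph{The summand $Z(\varphi)$.} It is self-dual and contributes nothing. All the asymmetry would have to come from $W$, and a $W$ with the property you want would answer the question on its own.
\end{itemize}

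The missing idea is to glue two \emph{different} singular centralizer sequences over a \emph{common} quotient, rather than to move one sequence along an operator. The paper takes the pullback of $\pi_\varphi: Z(\varphi)\to\ell_2$ and $\pi_\psi: Z(\psi)\to\ell_2$. This is the twisted Hilbert space $Z(\varphi,\psi)$ generated by $(\Omega_\varphi,\Omega_\psi)\Delta:\ell_2\to\ell_2\oplus\ell_2$. Duality turns this pullback into a pushout, so $Z(\varphi,\psi)^*$ contains $Z(\varphi)^*\simeq Z(\varphi)$.

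The asymmetry is Lemma \ref{lema}: no operator $S: Z(\varphi)\to Z(\varphi,\psi)$ has $\|S(e_n,0)\|$ bounded below, so $Z(\varphi)$ does not embed in $Z(\varphi,\psi)$. Its proof proceeds in four steps:
\begin{itemize}
\item strict singularity of the quotient map $\eta$ kills the common $\ell_2$-coordinate of $S(e_n,0)$;
\item Proposition \ref{ss}, applied to $QS: Z(\varphi)\to Z(\psi)$, kills the $Z(\psi)$-component;
\item singularity of $\Omega_\psi$ kills the common coordinate of $S(0,e_n)$;
\item testing on the flat vectors $y^N=N^{-1/2}\sum_{n\le N}e_n$, for which $(\Omega_\varphi y^N,y^N)$ has norm $1$ in $Z(\varphi)$, forces $\log N\le C\|S\|$ for all $N$, a contradiction.
\end{itemize}
Neither the fiber-product construction nor any of these ingredients appears in your plan, and they are what the proof actually rests on.
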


The proof requires some preparation. Pick two nontrivial twisted Hilbert spaces $Z(\Phi)$ and $Z(\Psi)$ generated by the quasilinear maps $\Phi, \Psi$ as we will depict in the next diagram. In the following pullback diagram we omit the initial and final $0's$

$$\xymatrix{
\ell_2 \ar[r]& Z(\Phi) \ar[r]^{\pi_{\Phi}}& \ell_2\ar@{-->}@/_2pc/[ll]_\Phi\ar@{-->}@/^2pc/[dd]^\Psi \\
\ell_2 \ar@{=}[u] \ar[r]& Z(\Phi, \Psi) \ar[u]^u\ar[r]_v& Z(\Psi) \ar[u]_{\pi_{\Psi}}\\
& \ell_2 \ar[u]\ar@{=}[r]& \ell_2 \ar[u]}$$

The diagram conceals a ``diagonal" exact sequence (see \cite{hmbst}; also \cite{castmoreLN})
$$\xymatrix{
0\ar[r] &\ell_2\oplus\ell_2  \ar[r]& Z(\Phi, \Psi) \ar[r]^{\eta}& \ell_2 \ar@{-->}@/_2pc/[ll]_{\Phi\oplus\Psi}\ar[r]&0}$$
where $(\Phi\oplus\Psi)(x) = (\Phi x, \Psi x)$ and $\eta=\pi_{\Phi} u = \pi_{\Psi} v$. Therefore, if $\Delta: \ell_2 \To \ell_2 \oplus \ell_2$ is the diagonal map $\Delta(x) = (x, x)$, the previous diagonal sequence coincides with the lower pullback sequence in the diagram
$$\xymatrix{
0\ar[r] &\ell_2\oplus \ell_2 \ar[r]& A\oplus B  \ar[r]^{\pi_{\Phi}, \pi_{\Psi}}& \ell_2\oplus \ell_2\ar[r]\ar@{-->}@/_2pc/[ll]_{(\Phi, \Psi)}&0 \\
0\ar[r] &\ell_2\oplus \ell_2 \ar@{=}[u] \ar[r]& Z(\Phi, \Psi) \ar[u]\ar[r]& \ell_2 \ar[u]_\Delta\ar[r]&0\\
}$$
since $\Phi\oplus \Psi = (\Phi, \Psi)\Delta$. The dual diagram

$$\xymatrix{
\ell_2^* \ar[d]_{\pi_{\Psi}^*}\ar[r]^{\pi_{\Phi}^*}& A^*\ar[d] \ar[r]& \ell_2^*\ar[d]\\
B^*\ar[d]\ar[r]& Z(\Phi, \Psi)^* \ar[d]\ar[r]& \ell_2^*\\
\ell_2 \ar@{=}[r]& \ell_2 &}$$
shows that $Z(\Phi, \Psi)^*$ contains both $A^*$ and $B^*$.\medskip

It is time to specialize. Pick $\Phi, \Psi$ centralizers, so that $Z(\Phi)^* \simeq Z(\Phi)$ and $Z(\Psi)^*\simeq Z(\Psi)$. The proof consists in showing that the right choice of $\Phi, \Psi$ yields a space $Z(\Phi, \Psi)$ that does not contain either $Z(\Phi)$ or $Z(\Psi)$. Therefore $Z(\Phi, \Psi)^* \simeq Z(\Phi, \Psi)$ is impossible.\medskip

With that purpose in mind, let $\mathcal C$ be the coneable family of Lipschitz embeddings obtained in \cite{cd}, and for each $\varphi\in \mathcal C$ let $Z(\varphi)$ be the associated twisted Hilbert space. Pick two $\varphi, \psi\in \mathcal C$ such that $\varphi$ is not a multiple of $\psi$, and let $\Omega_{\varphi}, \Omega_{\psi}$ denote the associated centralizers generating $Z(\varphi)$ and $Z(\psi)$. Form the pullback diagram

$$\xymatrix{
\ell_2 \ar[r]& Z(\varphi) \ar[r]^{\pi_{\varphi}}& \ell_2 \\
\ell_2 \ar@{=}[u] \ar[r]& Z(\varphi, \psi) \ar[u]^{u_\varphi}\ar[r]^{u_{\psi}}& Z(\psi) \ar[u]_{\pi_\psi}\\
& \ell_2 \ar[u]\ar@{=}[r]& \ell_2 \ar[u]}$$
and consider the standard representation of the pullback space
$$Z(\varphi, \psi) = \{(x, y, z, w) \in Z(\varphi) \oplus Z(\psi) : y = w\} = \{(x, y, z, y) \in Z(\varphi) \oplus Z(\psi)\}$$
endowed with the quasinorm
$$
\|(x, y, z, y)\|_{Z(\varphi, \psi)} = \|(x, y)\|_{\Omega_{\varphi}} + \|(z, w)\|_{\Omega_{\psi}} = \|x - \Omega_{\varphi} y \|_2 + \|z - \Omega_{\psi} y\|_2 + 2 \|y\|_2
$$
In this representation $\eta(x, y, z, y) = y$; and since $\eta = \pi_\varphi u_\varphi = \pi_{\psi} v_\psi$, the operator $\eta$ is strictly singular. Notice that we have two bounded operators $P : Z(\varphi, \psi) \rightarrow Z(\varphi)$ and $Q : Z(\varphi, \psi) \rightarrow Z(\psi)$, which are the restrictions of the natural projections of $Z(\varphi) \oplus Z(\psi)$ onto $Z(\varphi)$ and $Z(\psi)$, respectively.

\begin{lemma}\label{lema}
There is no bounded operator $S : Z(\varphi) \rightarrow Z(\varphi, \psi)$ such that $\|S(e_n, 0)\|_{Z(\varphi, \psi)} > c > 0$ for every $n$ and some $c > 0$.
\end{lemma}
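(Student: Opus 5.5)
Suppose, towards a contradiction, that such an $S$ exists. We decompose $S$ through the two natural operators $P: Z(\varphi,\psi)\to Z(\varphi)$ and $Q: Z(\varphi,\psi)\to Z(\psi)$. Since
$$\|(x,y,z,y)\|_{Z(\varphi,\psi)} = \|(x,y)\|_{\Omega_\varphi}+\|(z,y)\|_{\Omega_\psi}=\|P(x,y,z,y)\|+\|Q(x,y,z,y)\|,$$
the assumption $\|S(e_n,0)\|>c$ implies that, for each $n$, either $\|PS(e_n,0)\|>c/2$ or $\|QS(e_n,0)\|>c/2$. Passing to an infinite set $N\subset\N$, one of the two alternatives holds for all $n\in N$.

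\textbf{Case $Q$.} Suppose $\|QS(e_n,0)\|_{\Omega_\psi}>c/2$ for all $n\in N$. Then $QS: Z(\varphi)\to Z(\psi)$ is an operator whose restriction to $\ell_2$ is not almost-compact, because $QS(e_n,0)\not\to 0$. This contradicts Proposition \ref{ss}, since $\varphi$ is not a multiple of $\psi$.

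\textbf{Case $P$.} Suppose $\|PS(e_n,0)\|_{\Omega_\varphi}>c/2$ for all $n\in N$. Here $PS:Z(\varphi)\to Z(\varphi)$, so Proposition \ref{ss} gives nothing and a different argument is needed. Write $S(e_n,0)=(x_n,y_n,z_n,y_n)$. The key observation is that $\eta S = \pi_\varphi PS = \pi_\psi QS$. By Proposition \ref{ss}, $\|QS(e_n,0)\|\to 0$, so $\|y_n\|_2\to 0$. Because $(e_n,0)$ is weakly null, $y_n$ is weakly null, and after passing to a subsequence we may assume $\sum\|y_n\|_2<\infty$. Set $w_n = PS(e_n,0)-(x_n - \text{correction},0)$; more directly, $(x_n,y_n)-(\Omega_\varphi y_n,0)$... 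Rather than work with that expression, we use the estimate
$$\|(x_n,y_n)\|_{\Omega_\varphi}\le \|x_n-\Omega_\varphi y_n\|+\|y_n\|,$$
which shows that $\|x_n-\Omega_\varphi y_n\|_2$ stays bounded below, while $\|z_n-\Omega_\psi y_n\|_2\to 0$.

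The plan is to perturb $S$ on $\ell_2(N)$ so that it lands in the copy of $\ell_2$ (the kernel of $\eta$) on the first coordinate. Concretely, the map $e_n\mapsto (x_n-\Omega_\varphi y_n,0,0,0)$ differs from $S(e_n,0)$ by a summable sequence. The resulting restriction $S|_{\ell_2(N)}$ is then, up to a compact perturbation, an operator $\ell_2(N)\to \ell_2=\ker\eta$ which is bounded below on the basis vectors. A further subsequence makes these images a basic sequence equivalent to the unit vector basis, so $S$ fixes a copy of $\ell_2$ there.

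To reach a contradiction we extend this to all of $Z(\varphi)$. Restrict to the subspace $Z(\varphi)_N$ spanned by the coordinates in $N$, which is again isomorphic to $Z(\varphi)$. On it, $\eta S$ is an operator $Z(\varphi)\to\ell_2$ that vanishes, up to compact perturbation, on $\ell_2(N)$. Since $\ell_2\to Z(\varphi)\to\ell_2$ is singular, such operators factor through the quotient, up to compacts, as $T\pi_\varphi$. Hence $S$ induces a morphism of exact sequences from the (singular) sequence of $Z(\varphi)$ to the sequence $\ell_2\oplus\ell_2\to Z(\varphi,\psi)\to\ell_2$, or to a pullback of it along $T$, which is an isomorphism on the subspace. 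A lower triangular morphism of this kind forces $\Omega_\varphi$ to be equivalent, on a subsequence, to $\alpha(\Omega_\varphi\oplus\Omega_\psi)T$ with $\alpha$ an isomorphism onto its image. Projecting onto the second coordinate then forces $\Omega_\varphi|_{\ell_2(M)}\equiv \beta\Omega_\psi T$ for some infinite $M$, which is again incompatible with \cite[Theorem 4.1]{cd} and Proposition \ref{ss}.

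\textbf{Main obstacle.} The hardest step is Case $P$: passing from the basis-level information to a genuine morphism of sequences, and hence to an equivalence of $\Omega_\varphi$ with a composition involving $\Omega_\psi$. If that step fails, the fallback is to use the strict singularity of $\eta$ together with the incomparability of $Z(\varphi)$ and $Z(\psi)$ in \cite[Proposition 4.3]{cd} directly, applied to $QS$ restricted to the subspace where $PS$ is an isomorphism.
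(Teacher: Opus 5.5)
Your Case $Q$ is correct and coincides with the paper's Case 1. Your perturbation in Case $P$ is also what the paper does: you make $\|y_n\|_2+\|z_n-\Omega_\psi y_n\|_2$ summable and then replace $S(e_n,0)$ by $(a_n,0,0,0)$ with $a_n=x_n-\Omega_\varphi y_n$.

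The gap is the closing step of Case $P$. After the perturbation, the third coordinate of $S(e_n,0)$ is $0$. So the left vertical map of your morphism of sequences is $\alpha=(\alpha_1,0):\ell_2\to\ell_2\oplus\ell_2$, and the diagram gives $\alpha\Omega_\varphi\equiv(\Omega_\varphi\oplus\Omega_\psi)T$, that is,
\[
\alpha_1\Omega_\varphi\equiv\Omega_\varphi T \qquad\text{and}\qquad 0\equiv\Omega_\psi T .
\]
Composing with any left inverse $r=(r_1,r_2)$ of $\alpha$ only gives $\Omega_\varphi\equiv r_1\Omega_\varphi T+r_2\Omega_\psi T\equiv r_1\Omega_\varphi T$. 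So the second coordinate yields only $\Omega_\psi T\equiv 0$, never $\Omega_\varphi\equiv\beta\Omega_\psi T$. What remains is a self-relation of $\Omega_\varphi$, a $Z(\varphi)\to Z(\varphi)$ situation where, as you yourself observed, \cite[Theorem 4.1]{cd} and Proposition~\ref{ss} say nothing. Your fallback fails for the same reason: in Case $P$ the operator $QS$ is already essentially zero on $\ell_2$, so the incomparability of $Z(\varphi)$ and $Z(\psi)$ has nothing to act on.

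Two ingredients are missing.

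\emph{First, the singularity of $\Omega_\psi$.} Write $S(0,e_n)=(d_n,f_n,g_n,f_n)$, so $Te_n=f_n$. Applying $S$ to the vectors $(\Omega_\varphi y,y)$, which have norm $\|y\|_2$, gives $\|\sum y_ng_n-\Omega_\psi(\sum y_nf_n)\|_2\le\|S\|\,\|y\|_2$. If $\|f_n\|_2\not\to0$, then $\Omega_\psi$ would be trivial on the block subspace $[f_n]$, which is impossible. Hence $f_n\to 0$, and one may take $S(0,e_n)=(d_n,0,g_n,0)$.

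\emph{Second, an estimate that plays the twisting of $Z(\varphi)$ against the now untwisted target.} Evaluate $S$ at the norm-one vectors $(\Omega_\varphi y^N,y^N)$, where $y^N=N^{-1/2}\sum_{n\le N}e_n$. Three facts combine: $\Omega_\varphi y^N=\varphi(\log\sqrt N)\,y^N$; the $a_n$ may be taken to be disjoint blocks with $\|a_n\|_2\ge c'>0$; and the middle coordinates of the image vanish. So the image has norm at least $c'\,|\varphi(\log\sqrt N)|-O(\|S\|)\gtrsim\log N$ by the bi-Lipschitz property, contradicting the boundedness of $S$.

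Alternatively, $T$ is then compact, and $\Omega_\varphi\equiv r_1\Omega_\varphi T$ exhibits the sequence of $\Omega_\varphi$ as a pullback along a compact operator. Dualizing, and using that $\Omega_\varphi^*\equiv-\Omega_\varphi$ has strictly singular quotient map, contradicts \cite[Proposition 4.5]{compact}.

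Without one of these arguments, Case $P$ is not proved.
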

\begin{proof}
First, notice that by a standard perturbation argument, taking a subsequence of $(e_n)_n$, we may suppose that $S(e_n, 0) = (a_n, b_n, c_n, b_n)$ and $S(0, e_n) = (d_n, f_n, g_n, f_n)$ for every $n$, where $(a_n), (b_n), (c_n), (d_n), (f_n), (g_n)$ are block sequences of $(e_n)$ with the property that
\[
\mbox{supp } (a_n) , \mbox{supp } (b_n), \mbox{supp } (c_n), \mbox{supp } (d_n), \mbox{supp } (f_n), \mbox{supp } (g_n) \subset I_n
\]
and $I_1 < I_2 < I_3 < ...$ are intervals of natural numbers.\medskip

Since $\eta$ is strictly singular, $\eta S$ is strictly singular and therefore its restriction to (any copy of) $\ell_2$ is compact. Consequently
$\eta S (e_n, 0) = b_n \to 0$. Thus, we may assume that $S(e_n, 0) = (a_n, 0, c_n, 0)$. Since $\|S(e_n, 0)\|_{Z(\varphi, \psi)} = \|a_n\|_2 + \|c_n\|_2 > c$ for every $n$, given $k \geq 2$ and $n$, we have $\|a_n\|_2 > \frac{c}{k}$ or $\|c_n\|_2 > \frac{c}{k}$. Therefore, passing to a subsequence, we have the alternative:\medskip

\textbf{Case 1:} $\|c_n\|_2 > d > 0$ for every $n$. In that case the operator $Q S : Z(\varphi) \rightarrow Z(\psi)$ satisfies $\|Q  S(e_n, 0)\|_{Z(\psi)} > d > 0$ for every $n$ in open contradiction with \cite[Proposition 4.3]{cd} (see the first line of the proof) .\medskip

\textbf{Case 2:} $\|c_n\|_2 \rightarrow 0$. Then, again by a perturbation, we may suppose that $S(e_n, 0) = (a_n, 0, 0, 0)$.

\medskip

\emph{Claim:} $\|f_n\|_2 \rightarrow 0$. Otherwise, we can assume without loss of generality that $\|f_n\|_2 > \delta > 0$ for every $n$
and the operator $T (y) = \sum_n y_n f_n$ is an isomorphism onto its image on $\ell_2$. Since
$$S(x, y) = \Big(\sum_n (x_n a_n + y_n d_n), \sum_n y_n f_n, \sum_n y_n g_n, \sum_n y_n f_n\Big)$$
we obtain
$$ \left \|\sum_n y_n g_n - \Omega_{\psi} \Big(\sum_n y_n f_n\Big)\right\|_2 \leq \|S\| \|y\|_2.$$

Thus,$\Omega_{\psi}$ would be trivial on $T[\ell_2]$, which is impossible because it is singular. That proves the claim.\\

Therefore, we may suppose that $S(0, e_n) = (d_n, 0, g_n, 0)$, so that
$$S(x, y) = (\sum_n (x_n a_n + y_n d_n), 0, \sum_n y_n g_n, 0).$$
Given $N$ let $y^N = \sum\limits_{n=1}^N \frac{1}{\sqrt{N}} e_n$. Apply $S$ to $(\Omega_{\varphi}(y^N), y^N)$ to obtain
$$\left\|\sum_{n=1}^N \frac{1}{\sqrt{N}} \varphi(\log \sqrt{N}) a_n + \sum_{n=1}^N \sqrt{\frac{1}{N}} d_n\right\|_2 \leq \|S\|
$$
Notice that $\|d_n\|_2 \leq \|S(0, e_n)\| \leq \|S\|$. Since $(d_n)$ is a block sequence, we get
$$\left\|\sum_{n=1}^N \sqrt{\frac{1}{N}} d_n\right\|_2 \leq \|S\|$$
Since $\|a_n\|_2 = \|S(e_n, 0)\| > c$, $(a_n)$ is a block sequence and $\varphi$ is bi-Lipschitz we get
$$\left\|\sum_{n=1}^N \frac{1}{\sqrt{N}} \varphi(\log \sqrt{N}) a_n\right\|_2 \geq C \log N$$
for some $C$ and every $N$. Thus
\begin{eqnarray*}
C \log N & \leq & \left\|\sum_{n=1}^N \frac{1}{\sqrt{N}} \varphi(\log \sqrt{N}) a_n\right\|_2 \\
        & \leq & \left\|\sum_{n=1}^N \frac{1}{\sqrt{N}} \varphi(\log \sqrt{N}) a_n + \sum_{n=1}^N \sqrt{\frac{1}{N}} d_n\right\|_2 + \left\|\sum_{n=1}^N \sqrt{\frac{1}{N}} d_n\right\|_2 \\
        & \leq & 2 \|S\|
\end{eqnarray*}
for every $N$, which is impossible.\end{proof}

\subsection{The family $Z(\varphi, \psi)$} We have just shown that given $\varphi, \psi\in \mathcal C$, one not multiple of the other, we can construct a twisted Hilbert space $Z(\varphi, \psi)$ not isomorphic to its dual. We show now:

\begin{theorem}\label{zfg} Let $\varphi_1, \psi_1, \varphi_2, \psi_2 \in \mathcal C$ be. If either no nonzero linear combination of $\varphi_1, \psi_1$ and $\varphi_2$ is bounded or no nonzero linear combination of $\varphi_1, \psi_1$ and $\psi_2$ is bounded then $Z(\varphi_1, \psi_1)$ is not a subspace of $Z(\varphi_2, \psi_2)$.
\end{theorem}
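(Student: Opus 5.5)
Assume, for contradiction, an into-isomorphism $S: Z(\varphi_1,\psi_1)\To Z(\varphi_2,\psi_2)$, and suppose (by symmetry) that no nonzero linear combination of $\varphi_1,\psi_1,\varphi_2$ is bounded. The plan follows the proof of Lemma \ref{lema}. Write $\eta_i$ for the strictly singular quotient maps $Z(\varphi_i,\psi_i)\To\ell_2$, and $P_2,Q_2$ for the projections of $Z(\varphi_2,\psi_2)$ onto $Z(\varphi_2)$ and $Z(\psi_2)$.

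\emph{Step 1 (normalisation).} The subspace $\ell_2\oplus\ell_2$ sits inside $Z(\varphi_1,\psi_1)$ through the vectors $(e_n,0,0,0)$ and $(0,0,e_n,0)$. Use the same perturbation and block argument as in Lemma \ref{lema}. Since $\eta_2 S$ restricted to each copy of $\ell_2$ is strictly singular, hence compact, the $\eta$-coordinates of the images go to $0$. So we may assume
\[
S(e_n,0,0,0)=(a_n,0,c_n,0),\qquad S(0,0,e_n,0)=(a_n',0,c_n',0),
\]
with all vectors blocks on intervals $I_n$. Also write
\[
S(0,e_n,0,e_n)=(d_n,f_n,g_n,f_n).
\]

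\emph{Step 2 (use the hypothesis on $\psi_2$).} The operators $Q_2 S$ restricted to $Z(\varphi_1)$ and to $Z(\psi_1)$ are treated exactly as in Case 1 of Lemma \ref{lema}, using \cite[Proposition 4.3]{cd}; this gives $c_n,c_n'\to0$. One caveat: that proposition needs $\psi_2$ not to be a multiple of $\varphi_1$ (resp. $\psi_1$). If instead $\psi_2$ is proportional to one of them, that coordinate may survive. I expect to handle this by keeping $c_n$ as a linear term, which will later contribute a multiple of $\psi_2$ to a linear combination.

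Next, since $S$ is an isomorphism, $\|S(e_n,0,0,0)\|\ge c$, so $\|a_n\|_2$ or $\|c_n\|_2$ stays bounded below, and similarly for the primed vectors. Then the claim argument of Lemma \ref{lema} shows $\|f_n\|\to0$ unless $\Omega_{\varphi_2}$ or $\Omega_{\psi_2}$ is trivial on some copy of $\ell_2$, which singularity forbids. So $f_n=0$ after perturbation.

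\emph{Step 3 (the key estimate).} Test $S$ on $(\Omega_{\varphi_1}y^N, y^N, \Omega_{\psi_1}y^N, y^N)$ with $y^N=N^{-1/2}\sum_{n\le N}e_n$. This vector has norm about $2$. Reading off the $Z(\varphi_2)$ coordinate and subtracting $\Omega_{\varphi_2}(0)=0$ gives
\[
\Big\|\varphi_1(\log\sqrt N)\,N^{-1/2}\sum_{n\le N} a_n+\psi_1(\log\sqrt N)\,N^{-1/2}\sum_{n\le N} a_n'+N^{-1/2}\sum_{n\le N} d_n\Big\|_2\le 2\|S\|.
\]
The $d_n$ part is bounded because the $d_n$ form a bounded block sequence. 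The plan is to choose the test vector differently so that a nonzero linear combination $\lambda\varphi_1+\mu\psi_1+\nu\varphi_2$ appears and is forced to be bounded along $t=\log\sqrt N$, contradicting the hypothesis. In particular, when $S(e_n,0,0,0)$ has a nonzero $\Omega_{\varphi_2}$-component, one should instead test with $(\Omega_{\varphi_1}y^N,y^N,\Omega_{\psi_1}y^N,y^N)$ minus a corrected vector whose image picks up $\varphi_2(\log\sqrt N)$.

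\emph{Main obstacle.} The hard part is turning the blocks $a_n,a_n'$ into scalars, that is, showing that, after passing to subsequences and using Hilbert-space orthogonality, the $a_n$ and $a_n'$ are essentially proportional to one common normalised block $u_n$, with coefficients $\lambda,\mu$ converging. Then the bound above reads
\[
|\lambda\varphi_1(t)+\mu\psi_1(t)+O(1)|\le C.
\]
To do this I would project $a_n'$ onto $\operatorname{span}(a_n)$. The orthogonal part should be killed by a Case~2-type argument, which rules out two independent unbounded directions. Bringing in $\varphi_2$ needs the action of $\Omega_{\varphi_2}$ on $\sum a_n$-type vectors, and I expect this to be the most delicate point. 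Whichever order the argument takes, the hypothesis that no nonzero combination of $\varphi_1,\psi_1,\varphi_2$ is bounded is exactly what is needed to exclude every surviving case.
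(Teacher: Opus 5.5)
Your proposal has a genuine gap. It never produces the bounded nonzero linear combination, and the step meant to set up the final estimate is not valid.

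In Step 2 you apply \cite[Proposition 4.3]{cd} to ``$Q_2S$ restricted to $Z(\varphi_1)$'' and to $Z(\psi_1)$. But these are not subspaces of $Z(\varphi_1,\psi_1)$ in any usable way. The natural map $(x,y)\mapsto(x,y,0,y)$ is unbounded: at $(\Omega_{\varphi_1}s_N,s_N)$ the source has norm $1$, while the image has norm $|\psi_1(\log\sqrt N)|+2\to\infty$. Moreover, Lemma \ref{lema} shows that no operator $Z(\varphi_1)\To Z(\varphi_1,\psi_1)$ keeps the images of $(e_n,0)$ away from $0$. So there is no operator $Z(\varphi_1)\To Z(\psi_2)$ to which that proposition applies, and $c_n,c_n'\to 0$ is unproved.

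Your singularity argument for $f_n\to0$ depends on exactly this. When $c_n\neq 0$, the $Z(\psi_2)$-coordinate carries the extra term $\sum_n(\Omega_{\varphi_1}y)_n c_n$. You concede yourself that the argument may fail when $\psi_2$ is proportional to $\varphi_1$ or $\psi_1$, and the first hypothesis allows that situation.

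Step 3 is also inconsistent with Step 2. Once $f_n=0$, the $\varphi_2$-term has disappeared from your estimate, so there is nothing left to ``pick up''. The real difficulty you identify is making the blocks $a_n,a_n'$, and the action of $\Omega_{\varphi_2}$ on sums of the $f_n$, behave like scalars. That step is only announced, not carried out.

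The missing idea is the reduction the paper makes before any estimate. Apply the diagonal technique \cite[Proposition 1.c.8]{lindtzaf}, as in \cite{kcomplex}, to the 3-UFDD $E_n=[(e_n,0,0,0),(0,0,e_n,0),(0,e_n,0,e_n)]$. Then an embedding may be assumed to act by one fixed scalar matrix on every $E_n$:
$$T(x,y,z,y)=(\alpha_1x+\alpha_2z+\alpha_3y,\ \alpha_7y,\ \alpha_4x+\alpha_5z+\alpha_6y,\ \alpha_7y).$$
Since $\Omega_\varphi(s_N)=\varphi(\log\sqrt N)s_N$, testing on $(\Omega_{\varphi_1}s_N,s_N,\Omega_{\psi_1}s_N,s_N)$ shows directly that $\alpha_1\varphi_1+\alpha_2\psi_1-\alpha_7\varphi_2$ and $\alpha_4\varphi_1+\alpha_5\psi_1-\alpha_7\psi_2$ are bounded along $t=\log\sqrt N$, hence on $(0,\infty)$.

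Here $\varphi_2$ enters through the $\eta$-coefficient $\alpha_7$, and it is the hypothesis on linear combinations, not singularity, that kills it. One row gives $\alpha_7=0$ together with the two coefficients on $\varphi_1,\psi_1$. The other row then becomes a bounded combination of $\varphi_1,\psi_1$ alone, which must be trivial. Hence $T$ vanishes on every $(x,0,z,0)$ and cannot be an embedding. No block-by-block analysis and no appeal to \cite[Proposition 4.3]{cd} is needed.
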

\begin{proof} A combination of the diagonal technique \cite[Proposition 1.c.8]{lindtzaf} as in \cite{kcomplex}, see also, \cite{cd}, and the fact that the spaces $Z(\varphi, \psi)$ have a 3-UFDD given by $E_n = [(e_n, 0, 0, 0), (0, 0, e_n, 0), (0, e_n, 0, e_n)]$ shows that if there is an operator (isomorphic embedding) $Z(\varphi_1, \psi_1) \rightarrow Z(\varphi_2, \psi_2)$ then there is an operator (isomorphic embedding)  $T : Z(\varphi_1, \psi_1) \rightarrow Z(\varphi_2, \psi_2)$ defined by an scalar matrix
$$T = \begin{pmatrix}
\alpha_1 & \alpha_2 & \alpha_3 \\
\alpha_4 & \alpha_5 & \alpha_6 \\
0 & 0 & \alpha_7
\end{pmatrix} : Z(\varphi_1, \psi_1) \rightarrow Z(\varphi_2, \psi_2)
$$
in the form $T(x, y, z, y) = (\alpha_1 x + \alpha_2 z + \alpha_3 y, \alpha_7 y, \alpha_4 x + \alpha_5 z + \alpha_6 y, \alpha_7 y).$\\

\noindent \textbf{Claim} \emph{If no nonzero linear combination of $\varphi_1, \psi_1$ and $\varphi_2$ is bounded or no nonzero linear combination of $\varphi_1, \psi_1$ and $\psi_2$ is bounded then $\alpha_1 = \alpha_2 = \alpha_4 = \alpha_5 = \alpha_7 = 0$.}\medskip

\begin{proof}[Proof of the Claim] Pick the normalized element  $s_N = \frac{1}{\sqrt{N}}\sum_{n=1}^N e_n$, and notice that that $\Omega_\varphi(s_N) = \varphi(\log \sqrt{N}) s_N$. Since
$$T(\Omega_{\varphi_1} x, x, \Omega_{\psi_1} x, x) = (\alpha_1 \Omega_{\varphi_1} x + \alpha_2 \Omega_{\psi_1} x + \alpha_3 x, \alpha_7 x, \alpha_4 \Omega_{\varphi_1} x + \alpha_5 \Omega_{\psi_1} x + \alpha_6 x, \alpha_7 x)$$
we obtain that $\left|\alpha_1 \varphi_1(\log \sqrt{N}) + \alpha_2 \psi_1(\log \sqrt{N}) + \alpha_3 - \alpha_7 \varphi_2 (\log \sqrt{N})\right|$ is bounded above by
\begin{eqnarray*}
\|(\alpha_1 \Omega_{\varphi_1} + \alpha_2 \Omega_{\psi_1} + \alpha_3 - \alpha_7 \Omega_{\varphi_2}) s_N\|
&\leq&\Big\|T\Big(\Omega_{\varphi_1} s_N, s_N, \Omega_{\psi_1} s_N, s_N\Big)\Big\|\\
&\leq& 2\|T\|.\end{eqnarray*}
Since $\varphi_1, \psi_1$ and $\varphi_2$ are bi-Lipschitz, this estimate gives us
$$\sup_{t > 0} \Big|\alpha_1 \varphi_1(t) + \alpha_2 \psi_1(t) - \alpha_7\varphi_2 (t) \Big| < \infty$$
so that $\alpha_1 = \alpha_2 = \alpha_7 = 0$. Similarly, we obtain $\alpha_4 = \alpha_5 = 0$.\end{proof}

To conclude the proof, let $T : Z(\varphi_1, \psi_1) \rightarrow Z(\varphi_2, \psi_2)$ be an isomorphic embedding, that we can assume of the form $$\begin{pmatrix}
\alpha_1 & \alpha_2 & \alpha_3 \\
\alpha_4 & \alpha_5 & \alpha_6 \\
0 & 0 & \alpha_7
\end{pmatrix}$$
The claim shows that the matrix is actually
$$\begin{pmatrix}
0 & 0& \alpha_3 \\
0 & 0 & \alpha_6 \\
0 & 0 & 0
\end{pmatrix}$$
which implies that $T[ Z(\varphi_1, \psi_1)] \subset \ell_2\oplus \ell_2$ and $Z(\varphi_1, \psi_1)$ would be a Hilbert space.\end{proof}

\subsection{Problem 2.}\label{sec:problem2} The second question of Cabello, in part connected with Problem 1 but having an intrinsic interest in itself, is: \emph{Is every quasi-linear map between Hilbert spaces isomorphic to a centralizer?} Here the key of the question is the word \emph{isomorphic}. Indeed, there is nothing special in a quasilinear map not equivalent to a centralizer: if $\Omega$ is a centralizer and we decompose $\ell_2 = \ell_2 \oplus \ell_2$ and set $\Tilde{\Omega} = (0, \Omega)$, then $\Tilde{\Omega}$ cannot be equivalent to a centralizer. And it is so because equivalence just means to be a centralizer in a prescribed basis. Thus, in general
\begin{proposition}
Let $F : \ell_2 \rightarrow \ell_2$ be a nontrivial quasilinear map with the following property: there are closed subspaces $U, V \subset \ell_2$ such that $\ell_2 = U \oplus V$, $F(U) \subset V$ and $F|_V = 0$. Then $F$ cannot be equivalent to a centralizer.
\end{proposition}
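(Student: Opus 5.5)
The plan is to use the self-duality of centralizers. The fact behind Problem 1 (\cite{kaltdiff,cabeann}) is that an $\ell_\infty$-centralizer $\Omega$ on $\ell_2$ generates a sequence equivalent, via the canonical identification $\ell_2^*=\ell_2$, to its dual sequence. In quasilinear terms, the dual sequence is generated by $-\Omega$, up to equivalence. So I will show that $F$ cannot have this property unless $F$ is trivial. Since the statement assumes $F$ nontrivial, this gives the contradiction.

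\textbf{Step 1: a working description of the dual map.} For a $1$-quasilinear $F:\ell_2\To\ell_2$ (Kalton's theorem makes every quasilinear map here $1$-quasilinear), I will use the following description of the dual sequence $0\to\ell_2\to(\ell_2\oplus_F\ell_2)^*\to\ell_2\to0$. It is generated by a quasilinear map $F^*$ for which the ``commutator form'' $\langle F^*x,y\rangle+\langle x,Fy\rangle$ is controlled on finitely supported vectors. I expect to construct it as follows. First fix a linear selector for the dual quotient map. Then correct it by a bounded homogeneous selector obtained from Hahn--Banach norming of the functionals $(y,x)\mapsto\langle x,y\rangle$. This gives, up to equivalence,
$$\big|\langle F^*x,y\rangle+\langle x,Fy\rangle\big|\le C\|x\|\|y\|,$$
at least after symmetrizing over the relevant finite-dimensional pieces. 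The form of $F^*$ needed later only has to hold up to $\equiv$.

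With this description, the block structure of $F$ passes to $F^*$. Write $\ell_2=U\oplus V$ with projections $P_U,P_V$, and set $G=P_VF|_U:U\To V$. Then $F(u+v)=Gu$, so $F=\jmath_V\, G\, P_U$. By the pairing identity, $F^*$ is equivalent to $\jmath_{U'}\,G^*\,P_{V'}$, where $U',V'$ are the annihilator decomposition. In particular $P_{V'}F^*\jmath_{U'}\equiv0$: the dual block vanishes in the position where $G$ sits.

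\textbf{Step 2: derive the contradiction.} Suppose $F\equiv\Omega$ with $\Omega$ a centralizer. Then $F^*\equiv\Omega^*\equiv-\Omega\equiv-F$. Equivalence is preserved by composing with bounded linear maps on either side, since these are pushouts and pullbacks. Compressing the relation $F^*\equiv-F$ to the $V$-row and $U$-column, after transporting by the Riesz identification, gives
$$G=P_VF\jmath_U\equiv-P_VF^*\jmath_U\equiv0 .$$
Hence $G$ is trivial, and $F=\jmath_VGP_U$ is a composition of a trivial map with bounded linear operators, so $F\equiv0$. This contradicts the nontriviality of $F$.

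The main obstacle is Step 1. I have to make precise the duality formula for $F^*$ for an arbitrary quasilinear map on $\ell_2$, and verify that $\Omega^*\equiv-\Omega$ for centralizers holds as an \emph{equivalence} (same identification of $\ell_2^*$ with $\ell_2$), not merely as an isomorphism of sequences. If the identification used in \cite{cabeann} involves a nontrivial isomorphism, I would first try to show that it is diagonal with respect to the unit vector basis, so that it still respects the compression argument.
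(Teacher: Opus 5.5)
Your route through the self-duality of centralizers is genuinely different from the paper's. As written, however, Step~2 does not follow from Step~1.

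\textbf{The compression step.} Dualizing $F\equiv \jmath_V G P_U$ gives $F^*\equiv P_U^*\,G^*\,\jmath_V^*$. This map vanishes on $V^\perp=\ker \jmath_V^*$ and takes values in $V^\perp=\operatorname{ran}P_U^*$. So its only nonzero block sits in a decomposition $\ell_2=W\oplus V^\perp$ (for instance $W=U^\perp$), not in $U\oplus V$.

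The block you use, $P_VF^*\jmath_U$, is automatically trivial when $U=V^\perp$, because then $F^*$ kills $U$. For a non-orthogonal complemented pair there is no reason for it to be trivial, so the step $G\equiv -P_VF^*\jmath_U\equiv 0$ is unjustified.

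The repair is cheap and removes the block bookkeeping entirely:
\begin{itemize}
\item from $F^*|_{V^\perp}\equiv 0$ and $F\equiv -F^*$ you get $F|_{V^\perp}\equiv 0$;
\item together with $F|_V=0$, $\ell_2=V\oplus V^\perp$ and quasilinearity, this gives $F\equiv 0$.
\end{itemize}
This only needs functoriality of duality, $(\alpha F\gamma)^*\equiv \gamma^*F^*\alpha^*$, not the commutator-form description of Step~1. Note also that $F(u+v)=Gu$ holds only up to a bounded homogeneous error.

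\textbf{The self-duality input.} You need $\Omega^*\equiv-\Omega$ for every centralizer, with respect to the same identification $\ell_2^*=\ell_2$ used for $F^*$. You take this on faith; the paper does quote it for differentials. Two cautions apply.
\begin{itemize}
\item Over $\mathbb C$ the Riesz map is conjugate-linear, so the identification must be the bilinear pairing $\sum x_ny_n$. Then $V^\perp$ means the bilinear annihilator, which need not complement $V$: for $V=\overline{\mathrm{span}}\{e_{2n-1}+ie_{2n}\}$ it equals $V$. For non-coordinate complex subspaces the argument therefore needs extra work.
\item Your fallback of a diagonal identification is compatible only with coordinate subspaces.
\end{itemize}

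\textbf{Comparison with the paper.} The paper avoids duality altogether. It replaces $\Omega$ by an equivalent contractive (support-preserving) centralizer, so that $\Omega(U)\subset U$. The $V$-component of the bounded map $F-\Omega-L$ on $U$ then shows that $F|_U$ is at bounded distance from the linear map $P_VL|_U$, and $F\equiv 0$ follows.

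That argument is far more elementary. But $\Omega(U)\subset U$ implicitly requires $U$ to be spanned by unit vectors, as in the motivating example $(0,\Omega)$. So your repaired argument does buy something: granted the self-duality, it covers arbitrary complemented $U,V$ over the reals.
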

\begin{proof} In \cite{cfg} it is proved that every centralizer $\Phi$ is boundedly equivalent to a contractive centralizer $\Phi'$, i.e., for every $x\in \ell_2$, the support of $\Phi' x$ is contained in the support of $x$. Now, if the quasilinear map $F$ is equivalent to a centralizer, we may find a linear map $L : \ell_2 \rightarrow \ell_2$ and a contractive centralizer $\Omega$ such that $F - \Omega - L$ is bounded. Write $L|_U = (L_1, L_2)$. Since $F(U) \subset V$ and $\Omega(U) \subset U$, we actually obtain that $F|_U - L_2$ is bounded. Since $F|_V = 0$, that implies that $F$ is trivial.\end{proof}

In particular, $\Omega_{\ELP}$ is not equivalent to a centralizer. We do not know however if it is isomorphic to a centralizer. We notice, however, that the negative solution of Problem 1 automatically gives a negative solution of Problem 2. Indeed, it is well known that every differential $\Omega$ on $\ell_2$ satisfies $\Omega^* \equiv - \Omega$ (see \cite{ccc} for a generalization).
Thus, if $\Omega$ is isomorphically equivalent to a centralizer, then $\ell_2 \oplus_{\Omega} \ell_2$ is isomorphic to its dual.

The next proposition shows that it is possible to obtain a negative solution to Problem 2 even though the twisted Hilbert space obtained is isomorphic to its dual. Observe that
$$\Omega \oplus \Omega = (\Omega, \Omega)\Delta = \Delta \Omega.$$


\begin{proposition}\label{prop:2} Given a singular centralizer $\Omega$ on $\ell_2$, the quasilinear map $\Delta\Omega$ is not isomorphic to a centralizer.\end{proposition}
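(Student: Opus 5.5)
The plan is to show that the sequence defined by $\Delta\Omega$ is not isomorphically equivalent to its dual sequence, whereas every sequence generated by a centralizer is. Suppose, towards a contradiction, that $\Delta\Omega\sim\Phi$ for some centralizer $\Phi$ on $\ell_2$. Then there are isomorphisms $\alpha:\ell_2\oplus\ell_2\To\ell_2$ and $\gamma:\ell_2\To\ell_2$ with $\alpha\Delta\Omega\equiv\Phi\gamma$.

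First I will dualize this equivalence. Dualizing the commutative diagram of isomorphically equivalent sequences gives a commutative diagram of the dual sequences with vertical isomorphisms $\gamma^*,\beta^*,\alpha^*$. Hence $(\Delta\Omega)^*\sim\Phi^*$. By the fact quoted above ($\Phi^*\equiv-\Phi$, see \cite{ccc,cabeann}), $\Phi^*\sim\Phi$. Chaining, we get $(\Delta\Omega)^*\sim\Phi\sim\Delta\Omega$.

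Next I identify $(\Delta\Omega)^*$. Dualization exchanges pushouts and pullbacks. The pushout of the $\Omega$-sequence by $\Delta$ therefore dualizes to the pullback of the dual sequence $\Omega^*\equiv-\Omega$ by $\Delta^*:\ell_2\oplus\ell_2\To\ell_2$, where $\Delta^*(a,b)=a+b$. Thus $(\Delta\Omega)^*\equiv-\Omega\Delta^*$, and up to a sign (which is an isomorphic equivalence) the dual sequence is given by $\Omega\Delta^*:\ell_2\oplus\ell_2\To\ell_2$. We would thus obtain $\Delta\Omega\sim\Omega\Delta^*$.

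The contradiction comes from singularity, which is an invariant of isomorphic equivalence. If $\alpha F\equiv G\gamma$ and $F$ is trivial on some infinite-dimensional subspace $W$, then $G$ is trivial on $\gamma[W]$. Equivalently, the quotient maps are related by isomorphisms, so strict singularity transfers.
\begin{itemize}
\item $\Delta\Omega$ is singular. If $\Delta\Omega|_W\equiv 0$ on an infinite-dimensional $W$, composing with the first coordinate projection $p_1$ gives $p_1\Delta\Omega|_W=\Omega|_W\equiv 0$. This contradicts the singularity of $\Omega$.
\item $\Omega\Delta^*$ is not singular. On the infinite-dimensional subspace $W=\{(a,-a)\}$ we have $\Delta^*=0$, so $\Omega\Delta^*|_W=0$ is trivial.
\end{itemize}
Hence $\Delta\Omega\not\sim\Omega\Delta^*$, and therefore $\Delta\Omega$ is not isomorphic to a centralizer.

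The step needing most care is the identification $(\Delta\Omega)^*\equiv\Omega^*\Delta^*$, that is, the duality between pushout and pullback at the level of quasilinear maps. I would justify it with the diagram-level duality from \cite{hmbst}, rather than by computing with selectors. Everything else is formal.
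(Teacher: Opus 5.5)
Your proposal is correct and follows essentially the same route as the paper: $\Delta\Omega$ is singular because $\Omega$ is a pushout of it, while its dual $\Omega^*\Delta^*\equiv-\Omega\Delta^*$ is trivial on $\{(a,-a)\}$ and so is not singular; an isomorphism $\Delta\Omega\sim\Phi$ with a centralizer would, via $\Phi^*\equiv-\Phi$, force $\Delta\Omega\sim(\Delta\Omega)^*$, contradicting the invariance of singularity under isomorphic equivalence. Your write-up is somewhat cleaner than the paper's, since you work with the relation $\alpha\Delta\Omega\equiv\Phi\gamma$ and prove the transfer of singularity, whereas the paper writes the isomorphisms as literal equalities.
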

\begin{proof} Let us first observe that the quasiliear map $\Delta \Omega =(\Omega, \Omega)\Delta$ generating the sequence
$$\xymatrix{
0\ar[r] &\ell_2\oplus \ell_2\ar[r]& Z(\Omega, \Omega) \ar[r]& \ell_2\ar[r]&0}$$
is singular by construction. However, the dual sequence
$$\xymatrix{
0\ar[r] &\ell_2^*\ar[r]& Z(\Omega, \Omega)^* \ar[r]& \ell_2^*\oplus\ell_2^* \ar[r]&0}$$
which is generated by $(\Delta \Omega)^* = \Omega^* \Delta^*= - \Omega \Sigma$, where $\Sigma = \Delta^*$ is the map $\Sigma(x,y)= x+y$, cannot be singular
because its restriction to  $\{ (x, - x): x\in \ell_2^*\}$ is obviously trivial.\medskip

Now, if $\Delta \Omega$ were isomorphic to a centralizer $\Phi$, namely $\Phi = \alpha \Delta \Omega \gamma$ for two given isomorphisms $\alpha, \gamma$, then
$$\gamma^* \Omega^*\Delta^* \alpha ^* = \Phi^* = -\Phi = - \alpha \Delta \Omega \gamma \Longrightarrow \Omega^* \Delta^* = {\gamma^*}^{-1} \alpha \Delta \Omega \gamma {\alpha^*}^{-1} $$
and thus $\Delta \Omega$ would be isomorphic to its dual $\Omega^*\Delta^*$; and since two isomorphic sequences must be simultaneously singular
$\Omega^*\Delta^*$ would be singular, which it is not.\end{proof}

This same argument could be repeated with a pair $(\Phi, \Psi)$ of centralizers such that:
\begin{itemize}
\item One of them is singular
\item They are projectively equivalent on some infinite dimensional subspace of $\ell_2$.
\end{itemize}

Indeed, the first condition, and the fact that both $\Phi, \Psi$ are pushout of $(\Phi, \Psi)\Delta$, imply that $(\Phi, \Psi)\Delta$ must be singular. The dual of $(\Phi, \Psi)\Delta$ is $\sigma(\Phi, \Psi)^* = \Psi^* + \Phi^* = - (\Psi + \Phi)$. This map cannot be singular by the second condition.


\section{Incomparable elements in $\Ext(\ell_2, \ell_2)$}

Consider the vector space $\Ext(\ell_2, \ell_2)$ of twisted Hilbert sequences (modulo equivalence) endowed with the following order: $\Phi\leq \Psi$ if $\psi$ is a pushout of $\Phi$, that is, there is an operator $\tau\in \mathfrak L(\ell_2)$ such that $\tau \Phi \equiv \Psi$. In \cite{ccfmcomplex} it was shown that the is no minimum for this order --i.e., there is no twisted Hilbert sequence $\Phi$ such that any other twisted Hilbert sequence has the form $\tau\Phi$ for some operator $\tau$ -- which is remarkable since other vector spaces such as $\Ext(c_0, C[0,1])$ have minimum. Subsequently, it was shown in \cite{ext2} the existence of two incomparable centralizers $\Phi, \Psi$ for that order. Let us improve those results showing, among other things, that all exact sequences of the coneable family of $Z(\varphi)$ spaces are incomparable.

\begin{proposition} Given $\varphi, \psi\in \mathcal C$, no commutative diagrams
\begin{equation}\label{po}\xymatrix{
0\ar[r] & \ell_2 \ar[d] \ar[r]& Z(\varphi) \ar[d] \ar[r]& \ell_2\ar@{=}[d]\ar[r]&0 \\
0\ar[r] &\ell_2 \ar[r]&  Z(\psi) \ar[r]& \ell_2 \ar[r]&0 }\end{equation}
or
\begin{equation}\label{pb}\xymatrix{
0\ar[r] & \ell_2 \ar@{=}[d] \ar[r]& Z(\varphi) \ar[r]& \ell_2\ar[r]&0 \\
0\ar[r] &\ell_2 \ar[r]&  Z(\psi) \ar[u]\ar[r]& \ell_2 \ar[r]\ar[u]&0 }\end{equation}
are possible.\end{proposition}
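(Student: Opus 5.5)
Throughout I assume, as in Proposition~\ref{ss}, that $\varphi$ is not a multiple of $\psi$. This hypothesis is needed: $\Omega_{c\varphi}=c\,\Omega_\varphi$ is trivially a pushout of $\Omega_\varphi$ by $c\cdot\mathrm{id}$. The plan is to handle the pushout diagram (\ref{po}) directly and then obtain (\ref{pb}) from it by duality.

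\emph{Pushout case.} Suppose (\ref{po}) commutes. Then there is an operator $\beta: Z(\varphi)\To Z(\psi)$ whose restriction to the subspace $\ell_2$ is some $\tau\in\mathfrak L(\ell_2)$ and which induces the identity on the quotient. In quasilinear terms this means $\tau\Omega_\varphi\equiv\Omega_\psi$. By Proposition~\ref{ss}, $\tau=\beta|_{\ell_2}$ is almost-compact, so $\tau e_n\To 0$. Next I would apply the diagonal technique \cite[Proposition 1.c.8]{lindtzaf}, as in the proof of Theorem~\ref{zfg}, now using the $2$-UFDD $E_n=[(e_n,0),(0,e_n)]$ of $Z(\varphi)$ and $Z(\psi)$. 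After passing to a subsequence of blocks, this replaces $\beta$ by an operator given by a scalar matrix
$$\begin{pmatrix} a & b\\ 0 & 1\end{pmatrix},$$
with the pushout structure preserved. The restriction to $\ell_2$ is then $a\cdot\mathrm{id}$. It is the diagonal limit of the operators $\tau$ restricted to blocks, and since $\tau e_n\To 0$ this forces $a=0$.

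I would now test on the normalized vectors $s_N=N^{-1/2}\sum_{n\le N}e_n$, supported on the chosen blocks. These satisfy $\Omega_\varphi s_N=\varphi(\log\sqrt N)s_N$ and $\Omega_\psi s_N=\psi(\log\sqrt N)s_N$. Applying the matrix operator to $(\Omega_\varphi s_N,s_N)$ and estimating the $Z(\psi)$ quasinorm exactly as in the Claim of Theorem~\ref{zfg} gives
$$\big|a\varphi(\log\sqrt N)+b-\psi(\log\sqrt N)\big|\le 2\|\beta\|.$$
With $a=0$ this says that $\psi$ is bounded along $\log\sqrt N\To\infty$. That contradicts the bi-Lipschitz property, which gives $|\psi(t)-\psi(0)|\ge ct$. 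Without using $a=0$, one would instead get that $a\varphi-\psi$ is bounded, which contradicts the defining properties of $\mathcal C$ since $\psi$ is not a multiple of $\varphi$. Either route closes this case.

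\emph{Pullback case.} Suppose (\ref{pb}) commutes, so $\Omega_\varphi\gamma\equiv\Omega_\psi$ for some $\gamma\in\mathfrak L(\ell_2)$. Dualizing gives $\gamma^*\Omega_\varphi^*\equiv\Omega_\psi^*$. Since centralizers satisfy $\Omega^*\equiv-\Omega$, this becomes $\gamma^*\Omega_\varphi\equiv\Omega_\psi$. That is a pushout diagram of the form (\ref{po}), which the first case has already ruled out.

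\emph{Main obstacle.} I expect the hardest step to be the diagonal reduction. One has to check that passing to a block subsequence and taking the diagonal limit keeps the operator compatible with the exact sequences, so that the quotient part stays the identity and the lower-left entry vanishes. One also has to check that the almost-compactness of $\tau$ indeed forces the limiting scalar $a$ to be $0$. If this step is troublesome, I would skip the almost-compactness argument altogether and use the fallback route above, which only needs $a\varphi-\psi$ to be unbounded for every scalar $a$.
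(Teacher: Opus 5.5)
Your proof is correct, and you are right to add the hypothesis that $\varphi$ is not a multiple of $\psi$; the paper's proof also needs it, since it goes through Proposition \ref{ss}. Your pullback case by duality is the same as the paper's. For (\ref{po}), however, you take a genuinely different route.

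The paper does no computation there. It uses almost-compactness to find an infinite $M$ with $\tau|_{\ell_2(M)}$ compact, and restricts both sequences to $M$ using $\supp\Omega(x)\subset\supp x$. It then invokes \cite[Proposition 4.5]{compact}: a pushout by a compact operator cannot have strictly singular quotient map, which contradicts the singularity of $Z(\psi)(M)$. You instead reuse the UFDD/diagonal machinery of Theorem \ref{zfg} together with flat test vectors, so your contradiction comes from the growth of $\psi$ (or of $a\varphi-\psi$) rather than from singularity.

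The paper's argument is shorter and works for any singular target sequence, without needing explicit eigenvalues on the $s_N$. Yours is self-contained. Its fallback version does not even need Proposition \ref{ss}, only that $a\varphi-\psi$ is unbounded for every scalar $a$. That holds because $\psi$ is bi-Lipschitz, and because $\varphi-\lambda\psi$ bounded would give $\Omega_\varphi\equiv\lambda\Omega_\psi$, against \cite[Theorem 4.1]{cd}.

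Your ``main obstacle'' is milder than you fear, because no reduction to a constant matrix is needed. Write $\beta(x,y)=(\tau x+Ly,\,y)$ and average $M_\epsilon\beta M_\epsilon$ over the sign changes $M_\epsilon(x,y)=(\epsilon x,\epsilon y)$. These are isometries of both spaces, since $\Omega_\varphi(\epsilon y)=\epsilon\,\Omega_\varphi(y)$, and they still induce the identity on the quotient. The averaging yields a bounded operator $D(x,y)=(\mathrm{diag}(\tau_{nn})x+\mathrm{diag}(L_{nn})y,\,y)$. Here $\sup_n|L_{nn}|<\infty$ because $\|\beta(0,e_n)\|$ is bounded, and $|\tau_{nn}|\le\|\tau e_n\|\to 0$.

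Now test $D$ on $(\Omega_\varphi s_F,s_F)$, where $s_F=N^{-1/2}\sum_{n\in F}e_n$ and $F$ is a set of $N$ indices with $|\tau_{nn}\varphi(\log\sqrt N)|\le 1$. This gives $|\psi(\log\sqrt N)|\le\|D\|+\sup_n|L_{nn}|+1$ for every $N$, which contradicts the bi-Lipschitz property of $\psi$.
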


\begin{proof} Proposition \ref{ss} shows that any operator $T: Z(\varphi)\To \Z(\psi)$ must have almost compact restriction to $\ell_2$. Let $M\subset \N$ be such that $\tau = T|_{\ell_2(M)}$ is compact. Now, \cite[Proposition 4.5]{compact} shows that the lower sequence in a commutative diagram
$$\xymatrix{
0\ar[r] & Y \ar[d]_\tau \ar[r]& Z \ar[d] \ar[r]& X\ar@{=}[d]\ar[r]&0 \\
0\ar[r] &Y' \ar[r]&  Z' \ar[r]& X \ar[r]&0 }$$
in which $\tau$ is compact cannot have strictly singular quotient map. The maps $\Omega_\phi$ have the property that $\supp \Omega_\phi(x) \subset \supp x$, hence the restriction of $\Omega_\varphi$ to $\ell_2(M)$ generates a sequence that we will denote
$\xymatrixcolsep{0,5cm}\xymatrix{0\ar[r] &\ell_2(M) \ar[r]&  Z(\psi)(M) \ar[r]& \ell_2(M) \ar[r]&0 }$ and has strictly singular quotient map.
A diagram like (\ref{po}) thus generates a diagram
$$\xymatrix{
0\ar[r] & \ell_2(M) \ar[d]_\tau \ar[r]& Z(\varphi)(M) \ar[d] \ar[r]& \ell_2(M)\ar@{=}[d]\ar[r]&0 \\
0\ar[r] &\ell_2(M) \ar[r]&  Z(\psi)(M) \ar[r]& \ell_2(M) \ar[r]&0}$$
with $\tau$ compact, and that is impossible. The second assertion follows by duality.\end{proof}

The same argumentation and the proof of Theorem \ref{zfg} show that if either no nonzero linear combination of $\varphi_1, \psi_1$ and $\varphi_2$ is bounded or no nonzero linear combination of $\varphi_1, \psi_1$ and $\psi_2$ is bounded then no commutative diagram
$$\xymatrix{
0\ar[r] & \ell_2 \oplus \ell_2\ar[d] \ar[r]& Z(\varphi_1, \psi_1) \ar[d] \ar[r]& \ell_2\ar@{=}[d]\ar[r]&0 \\
0\ar[r] &\ell_2\oplus \ell_2 \ar[r]&  Z(\varphi_2, \psi_2) \ar[r]& \ell_2 \ar[r]&0 }$$
is possible.\smallskip

On the other hand the construction of the sequence $\Phi\oplus\Psi$ is a particular case of the construction of products in the category $\mathfrak Q^{\ell_2}$ of exact sequences whose quotient space is $\ell_2$ as described in \cite[4.3]{castmoreLN}. Therefore, $\Phi\oplus \Psi \leq \Phi$ and $\Phi\oplus \Psi \leq \Psi$, showing that the nonexistence of initial element is an infinity question. Infinite products also exist in $\mathfrak Q^{\ell_2}$, and thus the product of a sequence of elements $F_\gamma: \ell_2\To \ell_2$, $\gamma\in \Gamma$ exists and is the quasilinear map $\ell_\infty(\Gamma,  F_\gamma): \ell_2 \To \ell_\infty (\Gamma, \ell_2)$: indeed, if $\pi_\theta$ denotes the projection onto the $\theta^{th}$ component, then $\pi_\theta \ell_\infty(\Gamma,  F_\gamma) = F_\theta$. The lack of initial element shows that the range cannot be reduced, in general, to be a Hilbert space. However, when the family is countable we can obtain a kind of product in the isomorphic sense: pick some $\sigma\in \ell_2$ with $\sigma_n\neq 0$ for every $n$ and observe that the product $\ell_\infty(\N, \sigma_n F_n)$ of $(\sigma_n F_n)$ yields a twisted Hilbert sequence such that each $F_k$ is isomorphic to $\pi_k \ell_\infty(\N, \sigma_n F_n)$.

\section{Further considerations and unanswered questions}

\subsection{Twisted Hilbert sequences not isomorphically equivalent to its dual}

The identity $\Phi^* \equiv - \Phi$ yields that $\ell_2\oplus_\Phi \ell_2$ is isomorphic to its dual and the information that the two sequences below are isomorphically equivalent:
$$\xymatrix{
0\ar[r] & \ell_2 \ar[d]_{-1} \ar[r]& Z(\Phi) \ar[d] \ar[r]& \ell_2\ar@{=}[d]\ar[r]\ar@{-->}@/_2pc/[ll]_\Phi &0 \\
0\ar[r] &\ell_2 \ar[r]&  Z(\Phi)^* \ar[r]& \ell_2 \ar@{-->}@/^2pc/[ll]^{\Phi^*}  \ar[r]&0 }$$

This suggests a weaker form of Problem 1: \emph{Is every twisted Hilbert sequence isomorphically equivalent to its dual?} The answer is of course no: consider the examples $(\Phi\oplus\Psi)\Delta$ answering Problem 1 and the examples $\Delta \Omega$ answering Problem 2 (during the proof it was shown that $\Delta\Omega$ cannot be isomorphic to $\Omega^*\Sigma$). However, even if the sequence generated by $\Delta\Omega$ cannot be isomorphic to that generated by $\Omega^*\Sigma$, the twisted Hilbert space $Z(\Omega, \Omega)$ is isomorphic to its dual because we have a commutative diagram

$$\xymatrix{
\ell_2 \ar[r]\ar[d]_\Delta& Z(\Omega) \ar[d]\ar[r]& \ell_2 \ar@{=}[d]\ar@{-->}@/_2pc/[ll]_{\Omega}  \ar[r]&0 \\
\ell_2\oplus \ell_2 \ar@{-->}[ur]\ar[d]\ar[r]& Z(\Omega, \Omega)\ar[d]\ar[r]& \ell_2 \ar[r]&0\\
\ell_2 \ar@{=}[r]& \ell_2&}$$
in which the dotted arrow exists because the left vertical sequence splits. Thus the inclusion $\ell_2\to Z(\Omega)$ can be extended to $\ell_2\oplus \ell_2$ and therefore the middle vertical sequence splits, which yields $Z(\Omega, \Omega) \simeq  Z(\Omega) \oplus \ell_2$. Since $Z(\Omega)\simeq Z(\Omega)^*$, also $Z(\Omega, \Omega) \simeq Z(\Omega, \Omega)^*$.\medskip

A rather general criterium can be given. Following \cite{ideal}, given a twisted Hilbert sequence
$$\xymatrix{0 \ar[r] & \ell_2 \ar[r]^\imath & Z \ar[r]^Q & \ell_2 \ar[r] & 0}$$
an operator $f \in \OP L(\ell_2)$ (resp. $g \in \OP L(\ell_2)$) will be called \emph{$Z$-liftable} (resp. \emph{$Z$-extensible}) if there is an operator $F : \ell_2 \rightarrow Z$ (resp. $G : Z \rightarrow \ell_2$) making the corresponding diagram
$$\xymatrix{Z \ar[r]^Q& \ell_2\\
    &\ell_2 \ar[ul]^F\ar[u]_f } \quad \quad \quad \quad\mathrm{or} \quad  \quad\quad\quad  \xymatrix{\ell_2 \ar[r]^\imath\ar[d]_g &Z\ar[dl]^G\\
\ell_2}$$
commute. The space of $Z$-liftable operators will be denoted $\mathcal L^Z$ and that of $Z$-extensible operators, $\mathcal E_Z$. We have

\begin{proposition} Consider a twisted Hilbert sequence isomorphically equivalent to its dual in the form
$$\xymatrix{
0\ar[r]&Y\ar[d]_\alpha\ar[r]^\imath& Z\ar[d]^\beta\ar[r]^Q& X\ar[r]\ar[d]^\gamma & 0\\
0\ar[r]&Y^*\ar[r]_{Q^*}& Z^*\ar[r]_{\imath^*}& X^*\ar[r]& 0}$$
Then\begin{enumerate}
\item $ f \in \mathcal L^Z \Longleftrightarrow f^* \in \mathcal E_{Z^*} \Longleftrightarrow \gamma f \in \mathcal L^{Z^*} \Longleftrightarrow f^* \gamma^* \in \mathcal E_Z.$
\item $g \in \mathcal E_Z \Longleftrightarrow g^* \in \mathcal L^{Z^*}\Longleftrightarrow g \alpha^{-1} \in \mathcal E_{Z^*} \Longleftrightarrow (\alpha^{-1})^* g^* \in \mathcal L^Z.$
\end{enumerate}
\end{proposition}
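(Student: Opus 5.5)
The plan is to prove each chain of equivalences by two elementary ingredients. The first is plain duality: an operator $F:\ell_2\to Z$ with $QF=f$ dualizes to $F^*:Z^*\to\ell_2^*$ with $F^*Q^*=f^*$, which says exactly that $f^*$ is $Z^*$-extensible (the embedding of the dual sequence being $Q^*$). The second is transport of structure through the isomorphic equivalence $(\alpha,\beta,\gamma)$, using $\beta\imath=Q^*\alpha$ and $\imath^*\beta=\gamma Q$. I will treat (1) in detail; (2) is the mirror image, with $\imath$ in place of $Q$ and $\alpha$ in place of $\gamma$.

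\emph{Step 1: $f\in\mathcal L^Z\iff f^*\in\mathcal E_{Z^*}$.} The forward direction is the dualization just described. For the converse, take $G:Z^*\to\ell_2^*$ with $GQ^*=f^*$. Then $G^*:\ell_2^{**}\to Z^{**}$ satisfies $Q^{**}G^*=f^{**}$. Since $\ell_2$ is reflexive and $Z$, being a twisted Hilbert space, is reflexive (reflexivity is a three-space property), under the canonical identifications this reads $QG^*=f$. Hence $f\in\mathcal L^Z$. Reflexivity of $Z$ is the only genuinely non-formal input here.

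\emph{Step 2: $f\in\mathcal L^Z\iff\gamma f\in\mathcal L^{Z^*}$.} If $QF=f$, then $\imath^*(\beta F)=\gamma QF=\gamma f$, so $\beta F$ lifts $\gamma f$ through the quotient map $\imath^*$ of the dual sequence. Conversely, if $\imath^*F'=\gamma f$, put $F=\beta^{-1}F'$. Because $\beta$ is an isomorphism, the relation $\imath^*\beta=\gamma Q$ gives $\gamma Q\beta^{-1}=\imath^*$. Therefore $\gamma QF=\imath^*F'=\gamma f$, and invertibility of $\gamma$ yields $QF=f$.

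\emph{Step 3: close the chain.} Apply Step 1 to the dual sequence $0\to\ell_2^*\to Z^*\to\ell_2^*\to 0$, whose quotient map is $\imath^*$ and whose dual sequence is, by reflexivity, the original one. This gives $\gamma f\in\mathcal L^{Z^*}\iff(\gamma f)^*=f^*\gamma^*\in\mathcal E_{Z^{**}}=\mathcal E_Z$, which is the last equivalence in (1).

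For (2) the same three steps apply. First, $G\imath=g$ dualizes to $\imath^*G^*=g^*$, which means $g^*\in\mathcal L^{Z^*}$, and reflexivity gives the converse. Second, the transport uses $\beta\imath=Q^*\alpha$: from $G\imath=g$ we get $(G\beta^{-1})Q^*=G\imath\alpha^{-1}=g\alpha^{-1}$, and conversely $G'Q^*=g\alpha^{-1}$ gives $(G'\beta)\imath=g$. Third, dualizing the middle statement gives $(g\alpha^{-1})^*=(\alpha^{-1})^*g^*\in\mathcal L^Z$.

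The main obstacle is only bookkeeping. One must make sure the dual of the dual sequence is canonically the original one via reflexivity, including that the canonical identifications commute with $\imath$ and $Q$. One must also track the directions of the adjoints, for instance that $(\gamma f)^*=f^*\gamma^*$ has the domain and range required in the definition of $\mathcal E_Z$.
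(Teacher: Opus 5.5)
Your proposal is correct and follows the paper's argument. The duality step is the one the paper calls obvious, and your transport step via $\imath^*\beta=\gamma Q$ (resp.\ $\beta\imath=Q^*\alpha$) is exactly the paper's ``sleight with arrows''. Your closing step, applying the duality step to the dual sequence, is what the paper means by ``exchanging the roles of $Z$ and $Z^*$''; you are more explicit than the paper in noting that the reverse implications rely on the reflexivity of $Z$ to identify the bidual sequence with the original one.
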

\begin{proof} Let $f \in \mathcal L^Z$ and let $F$ be its lifting. The first equivalence in (1) is obvious, as it is the rest after a careful contemplation of the diagram
$$\xymatrix{
&&& \ell_2\ar[d]^f\ar[dl]_F&\\
0\ar[r]&Y\ar[d]_\alpha\ar[r]^\imath& Z\ar[d]^\beta\ar[r]^Q& X\ar[r]\ar[d]^\gamma & 0\\
0\ar[r]&Y^*\ar[r]_{Q^*}& Z^*\ar[r]_{\imath^*}& X^*\ar[r]& 0}$$
and some sleight with arrows: $Q \beta^{-1} = \gamma^{-1} \imath^*$ implies $\gamma^{-1} \imath^* \beta F = f$, so that $\gamma f \in \mathcal L^{Z^*}$. Exchanging the roles of $Z$ and $Z^*$ we get the rest. For the equivalences in (2), just stare at the diagram
$$\xymatrix{
& \ell_2\ar[d]_g\ar[dr]^G&\\
0\ar[r]&Y\ar[d]_\alpha\ar[r]^\imath& Z\ar[d]^\beta\ar[r]^Q& X\ar[r]\ar[d]^\gamma & 0\\
0\ar[r]&Y^*\ar[r]_{Q^*}& Z^*\ar[r]_{\imath^*}& X^*\ar[r]& 0}$$
and, again, $\beta \imath = Q^* \alpha$ implies $G \beta^{-1} Q^* \alpha = g$, so that $g \alpha^{-1}\in \mathcal E_{Z^*}$. Exchanging the roles of $Z$ and $Z^*$ we get the rest.\end{proof}

It is worth reformulating the result in the negative:
\begin{cor}\label{corollary}
Consider a twisted Hilbert sequence $\xymatrixcolsep{0.5cm}\xymatrix{0 \ar[r] & \ell_2 \ar[r] & Z \ar[r] & \ell_2 \ar[r] & 0}$. If either \begin{itemize}
\item For every isomorphism $\gamma$ on $\ell_2$ there is $f \in \mathcal L^Z$ such that $f^* \gamma \not\in \mathcal E_Z$; or
\item For every isomorphism $\alpha$ on $\ell_2$ there is $g \in \mathcal  E_Z$ such that $\alpha g^* \not\in \mathcal L^Z$
\end{itemize}
then the sequence is not isomorphically equivalent to its dual.\end{cor}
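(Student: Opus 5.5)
This is a contrapositive reading of the preceding Proposition, so the plan is to assume the sequence is isomorphically equivalent to its dual and contradict each bullet. Suppose there are isomorphisms $\alpha,\beta,\gamma$ giving a commutative diagram
$$\xymatrix{
0\ar[r]&\ell_2\ar[d]_\alpha\ar[r]^\imath& Z\ar[d]^\beta\ar[r]^Q& \ell_2\ar[r]\ar[d]^\gamma & 0\\
0\ar[r]&\ell_2^*\ar[r]_{Q^*}& Z^*\ar[r]_{\imath^*}& \ell_2^*\ar[r]& 0}$$
exactly as in the Proposition. Throughout I will identify $\ell_2^*$ with $\ell_2$ via the Riesz map. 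In this way $\alpha,\gamma$, their adjoints and inverses all become isomorphisms of $\ell_2$, and $\mathcal L^Z,\mathcal E_Z$ are subsets of $\OP L(\ell_2)$.

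For the first bullet, item (1) of the Proposition gives, for every $f\in\mathcal L^Z$, that $f^*\gamma^*\in\mathcal E_Z$. Put $\gamma_0=\gamma^*$, an isomorphism of $\ell_2$. Then $f^*\gamma_0\in\mathcal E_Z$ for every $f\in\mathcal L^Z$. This contradicts the hypothesis applied to the isomorphism $\gamma_0$, which provides some $f\in\mathcal L^Z$ with $f^*\gamma_0\notin\mathcal E_Z$.

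For the second bullet, item (2) gives, for every $g\in\mathcal E_Z$, that $(\alpha^{-1})^*g^*\in\mathcal L^Z$. Put $\alpha_0=(\alpha^{-1})^*$, again an isomorphism of $\ell_2$. Then $\alpha_0 g^*\in\mathcal L^Z$ for all $g\in\mathcal E_Z$, contradicting the hypothesis applied to $\alpha_0$.

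There is no real obstacle here. The one point needing care is the bookkeeping of identifications: adjoints land in $\ell_2^*$, and $\gamma^*$ or $(\alpha^{-1})^*$ are isomorphisms between duals. One has to check that after the Riesz identification the composites $f^*\gamma^*$ and $(\alpha^{-1})^*g^*$ are exactly of the forms quantified in the Corollary. In the complex case the Riesz map is conjugate-linear, so I would compose with it on both sides, or use the linear identification through the canonical basis. This keeps everything linear, and since conjugation by a fixed isomorphism preserves both "being an isomorphism" and the classes $\mathcal L^Z,\mathcal E_Z$ up to that fixed identification, the quantifier "for every isomorphism" absorbs the identification.
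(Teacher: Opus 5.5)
Your argument is correct and is essentially what the paper does: the paper gives no separate proof, only calling the Corollary a reformulation "in the negative" of the preceding Proposition, and your instantiation of the hypotheses at $\gamma_0=\gamma^*$ and $\alpha_0=(\alpha^{-1})^*$ is exactly that contrapositive spelled out. On your closing remark about identifications: with a fixed linear identification $\ell_2^*\cong\ell_2$ everything goes through. Conjugation by an isomorphism does not literally preserve $\mathcal L^Z$ and $\mathcal E_Z$. What makes the result independent of the chosen identification is that $\mathcal E_Z$ is stable under composition on the left and $\mathcal L^Z$ under composition on the right, so changing the identification only reparametrizes the quantified $\gamma$ (resp. $\alpha$).
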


\subsection{About being equivalent to its dual} Corollary \ref{corollary} can be reformulated for equivalent mappings:

\begin{cor}\label{corollary2} Consider a twisted Hilbert sequence $\xymatrixcolsep{0.5cm}\xymatrix{0 \ar[r] & \ell_2 \ar[r] & Z \ar[r] & \ell_2 \ar[r] & 0}$. If either\begin{itemize}
    \item There is $f \in \mathcal L^Z$ such that $f^* \not\in \mathcal E_Z$; or
    \item There is $g \in \mathcal E_Z$ such that $g^* \not\in \mathcal L^Z$
\end{itemize}
then the sequence is not equivalent to its dual.
\end{cor}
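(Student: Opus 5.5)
The plan is to specialize the previous proposition to the case where the sequence is \emph{equivalent} to its dual, i.e.\ $\alpha$ and $\gamma$ are the canonical identifications (or identities under $\ell_2^*\simeq\ell_2$), and then take the contrapositive. An equivalence of the sequence with its dual is exactly an isomorphically equivalent diagram
$$\xymatrix{
0\ar[r]&\ell_2\ar[d]_{\alpha}\ar[r]^\imath& Z\ar[d]^\beta\ar[r]^Q& \ell_2\ar[r]\ar[d]^{\gamma} & 0\\
0\ar[r]&\ell_2^*\ar[r]_{Q^*}& Z^*\ar[r]_{\imath^*}& \ell_2^*\ar[r]& 0}$$
in which $\alpha$ and $\gamma$ are the Riesz identifications, which we suppress, so that effectively $\alpha=\gamma=1$. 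I would state explicitly this convention, since it is the only place where care is needed: under it, $f^*$ and $g^*$ are again operators on $\ell_2$, and $\gamma^*$, $\alpha^{-1}$ act as identities.

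With $\gamma=1$, item (1) of the preceding proposition gives $f\in\mathcal L^Z\Longleftrightarrow f^*\gamma^*=f^*\in\mathcal E_Z$. Likewise, with $\alpha=1$, item (2) gives $g\in\mathcal E_Z\Longleftrightarrow (\alpha^{-1})^*g^*=g^*\in\mathcal L^Z$. Hence, if the sequence were equivalent to its dual, every $f\in\mathcal L^Z$ would satisfy $f^*\in\mathcal E_Z$, and every $g\in\mathcal E_Z$ would satisfy $g^*\in\mathcal L^Z$. Either of the two hypotheses of Corollary~\ref{corollary2} contradicts one of these implications, so the sequence cannot be equivalent to its dual.

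Alternatively, one can read this directly off Corollary~\ref{corollary}: being equivalent to the dual is the special case of being isomorphically equivalent with $\alpha=\gamma=1$, and the proof of that corollary only uses the given $\alpha,\gamma$. There is no real obstacle here. The only point deserving a sentence is the identification $\ell_2^*=\ell_2$, so that "equivalent" makes sense for sequences with formally different end spaces, together with checking that the adjoint of the identity is the identity under that identification (possibly up to conjugation in the complex case, which does not affect membership in $\mathcal L^Z$ or $\mathcal E_Z$).
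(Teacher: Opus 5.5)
Your argument is correct and is essentially the paper's: the paper gives no separate proof and simply reads the statement off the preceding proposition (via Corollary~\ref{corollary}) with $\alpha=\gamma=1$, which yields $f\in\mathcal L^Z\Rightarrow f^*\in\mathcal E_Z$ and $g\in\mathcal E_Z\Rightarrow g^*\in\mathcal L^Z$. One thing needs tidying. In the complex case, $\ell_2^*$ must be identified with $\ell_2$ through the linear pairing $\langle x,y\rangle=\sum_n x_ny_n$, not the conjugate-linear Riesz map, which cannot serve as $\alpha$ or $\gamma$; so $f^*$ is the transpose, and your parenthetical claim that conjugation does not affect membership is unnecessary and not justified in general, since $g\mapsto\overline{g}$ carries $\mathcal E_Z$ onto $\mathcal E_{\overline{Z}}$ for the conjugate sequence.
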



A question floating in the air is what occurs with the generic examples $\ell_2(\mathbb N, F_n)$, which are never singular (even when they are generated by a centralizer). In particular, the original $\ELP$ construction still contains some mysteries inside. We already know that $\Omega_{\ELP} $ is not equivalent to a centralizer. Let us show now that, as expected,

\begin{lemma}  $\Omega_{\ELP}$ is not  equivalent to its dual.
\end{lemma}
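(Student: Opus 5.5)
The plan is to apply Corollary \ref{corollary2} with operators that are block-diagonal with respect to the decomposition $\ell_2=\ell_2(\N,\ell_2^{3^n})$, exploiting the recursive support structure of the maps $\phi_n$.

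\textbf{Step 1: the characterizations.} For the sequence generated by $\Omega=\Omega_{\ELP}$, an operator $f$ is $Z$-liftable exactly when $\Omega f\equiv 0$. An operator $g$ is $Z$-extensible exactly when $g\Omega\equiv 0$. Both are the standard pullback/pushout criteria.

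\textbf{Step 2: candidate operators.} From the recursion, $\phi_{n+1}(x,y,z)$ does not depend on $z$ and its ``top'' component lands in the $z$-block. So there are coordinate sets that are never read by $\phi_n$ and coordinate sets where its values live. I will make these sets explicit by induction:
\begin{itemize}
\item $R_n\subset\{1,\dots,3^n\}$, the coordinates on which $\phi_n$ actually depends. For instance $R_{n+1}$ consists of the $P_n$-parts of the $x$- and $y$-blocks.
\item $W_n$, the coordinates where $\phi_n$ may be nonzero: the $z$-block together with the recursive copies of $W_n$ inside the $x$- and $y$-blocks.
\end{itemize}
Let $p=\ell_2(\N,p_n)$, where $p_n$ is the coordinate projection onto a set $A_n$ disjoint from $R_n$ (the first candidate is the top $z$-block). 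Then $\phi_n p_n=0$, so $\Omega p=0$ and $p\in\mathcal L^Z$. Since $p^*=p$ under the canonical identification, Corollary \ref{corollary2} reduces the lemma to showing $p\notin\mathcal E_Z$, that is, $p\,\Omega_{\ELP}=\ell_2(\N,p_n\phi_n)$ is nontrivial. For the top $z$-block this amounts to
$$\lim_{n\to\infty}\dist\Big(p_n\phi_n,\ \mathfrak L(\ell_2^{3^n})\Big)=\infty .$$
If the top block alone turns out to carry too little twisting, I will enlarge $A_n$ to a union of $z$-subblocks at many levels of the recursion that still avoids $R_n$. Then $p_n\phi_n$ keeps many of the level contributions whose accumulation produces the ELP estimate.

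\textbf{Step 3 (main obstacle): nontriviality of $p_n\phi_n$.} I will redo the Enflo--Lindenstrauss--Pisier lower bound, namely that $\ell_2^{3^n}$ is no better than $\sqrt n$-complemented, as in \cite{elp,castgonz}. The plan is to average the defect of a candidate linear map $L$ over the same random signed test vectors used there. The difficulty is that this averaging argument for $\phi_n$ collects one unit of defect per level, roughly, and these units add in square to give $\sqrt n$. One must check that the contributions kept by $p_n$ still accumulate without bound rather than being lost in the discarded coordinates. If the direct estimate is awkward, the fallback is the dual route, the second bullet of Corollary \ref{corollary2}. Take $g$ to be the projection onto coordinates disjoint from $W_n$, so that $g\Omega=0$ and hence $g\in\mathcal E_Z$. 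Then show $\Omega g^*=\Omega g$ is nontrivial, using the same ELP-type estimate restricted to inputs supported where $\phi_n$ reads.
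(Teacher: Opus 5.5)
\textbf{The main route cannot succeed.} Steps 2--3 use the first bullet of Corollary \ref{corollary2} with a coordinate projection $p_n$ onto a set $A_n$ disjoint from $R_n$. The proposed enlargement of $A_n$ is not available either. Index the coordinates of $\ell_2^{3^n}$ by words $a_1\cdots a_n\in\{x,y,z\}^n$, where $a_1$ records the top-level block. Since $\phi_m=\phi_mP_m$ for every $m$, $\phi_n(x,y,z)$ depends only on $P_{n-1}x$ and $P_{n-1}y$. So $R_n=\{x,y\}^2\{x,y,z\}^{n-2}$, and its complement consists only of the top $z$-block and the $z$-blocks of $x$ and $y$. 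Every deeper $z$-subblock of the recursion lies inside $R_n$. On the complement, $\phi_n$ deposits only two kinds of output: the top output $P_{n-1}(x)\|P_{n-1}(y)\|/(\|P_{n-1}x\|^2+\|P_{n-1}y\|^2)^{1/2}$, and the top outputs of $\phi_{n-1}(x)$ and $\phi_{n-1}(y)$. Each has norm at most half the norm of its argument, because $ab\leq (a^2+b^2)/2$. Hence $\|p_n\phi_n(v)\|\leq 2^{-1/2}\|v\|$ for every admissible $p_n$. So $p\,\Omega_{\ELP}$ is bounded and $p=p^*\in\mathcal E_Z$. At most two levels of the recursion survive, never the $\sqrt n$ accumulation Step 3 needs.

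\textbf{Your fallback is the route that works,} but it has to become the proof, with an actual estimate. Let $g_n$ be the projection onto $\ell_2(\{x,y\}^n)$, the words with no $z$. By induction every coordinate in the range of $\phi_n$ contains a $z$, so $g_n\phi_n=0$; hence $g\in\mathcal E_Z$ and $g^*=g$. For $v$ supported on $\{x,y\}^n$ the recursion unfolds to
$$\phi_n(v)=\sum_{k=0}^{n-1}\sum_{w\in\{x,y\}^k}\frac{\|v_{wy}\|}{(\|v_{wx}\|^2+\|v_{wy}\|^2)^{1/2}}\,S_wv_{wx},$$
where $v_{wx}$ is the restriction of $v$ to words beginning with $wx$, and $S_w$ transfers it to the words beginning with $wz$. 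On the flat sign vectors $v_\epsilon=2^{-n/2}\sum_\beta\epsilon_\beta e_\beta$ all ratios equal $2^{-1/2}$. So there $\phi_n$ agrees with the linear map $L_0v=2^{-1/2}\sum_wS_wv_{wx}$. On the other hand $\phi_n(e_\beta)=0$, while $2^{-n}\sum_\beta\|L_0e_\beta\|^2=n/4$. Suppose $\|\phi_ng_n-L\|\leq D$ with $L$ linear. Averaging over signs gives $2^{-n}\sum_\beta\|(L_0-L)e_\beta\|^2\leq D^2$, and also $\|Le_\beta\|\leq D$; hence $n/4\leq 4D^2$. Thus $\dist(\phi_ng_n,\mathrm{Lin})\geq\sqrt n/4$. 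Compressing to the $n$-th block, $\Omega_{\ELP}g$ is nontrivial, so $g^*\notin\mathcal L^Z$, and the second bullet finishes.

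\textbf{This also repairs the paper's own argument.} The paper uses the first bullet with $f_1=(0,0,z)$ and $f_{n+1}(x,y,z)=(f_nx,f_ny,z)$, which is the projection onto words containing a $z$, i.e.\ $f_n=I-g_n$. Its identity $f_n\phi_n=\phi_n$ is correct. But $\phi_nf_n=0$ fails from $n=3$ on, because $P_2f_2\neq 0$; for example $\phi_3f_3(e_3+e_{12})=2^{-1/2}e_{21}$. The same averaging over the words $\{x,y\}^{n-1}z$ gives $\dist(\phi_nf_n,\mathrm{Lin})\geq\sqrt{n-2}/4$, so $f\notin\mathcal L^Z$. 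Switching to the complementary projection $g$ and the second bullet, as in your fallback, fixes this.
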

\begin{proof} A direct adaptation of \cite[Lemma 2.3]{ideal} yields

\begin{lemma} Let $\xymatrixcolsep{0.5cm}\xymatrix{0\ar[r]& \ell_2  \ar[r]& Z \ar[r]& \ell_2 \ar[r]&0}$ be a twisted Hilbert sequence defined by the quasilinear map $\Omega$.
\begin{enumerate}
    \item[(a)] $f \in \mathcal L^Z$ if and only if $\mathrm{dist}(\Omega  f, \emph{\texttt{Lin}} (\ell_2)) < \infty$.
    \item[(b)] $f \in \mathcal E_Z$ if and only if $\mathrm{ dist} (f \Omega, \emph{\texttt{Lin}}(\ell_2)) < \infty$.
\end{enumerate}
\end{lemma}

Now recall the construction of $\Omega_{\ELP} = \ell_2(\mathbb N, \phi_n)$. It is enough to consider a family of normalized maps $f_n \in \OP L(\ell_2^{3^n})$ such that
$$\sup d(\phi_n f_n, \texttt{Lin}(\ell_2^{3^n})) < \infty \quad \quad  \mathrm{and}\quad \quad\sup d(f_n^* \phi_n, \texttt{Lin} (\ell_2^{3^n})) = \infty$$

To that end, set $f_1(x, y, z) = (0, 0, z)$ and $f_{n+1}(x, y, z) = (f_n(x), f_n(y), z)$. Then each $f_n$ is self-adjoint, $\phi_n f_n = 0$ while $f_n \phi_n = \phi_n$ and, therefore, $\ELP$ is not equivalent to its dual.\end{proof}

However, the major question left open in this paper is:
\begin{prob*}
    Is the Enflo-Lindenstrauss-Pisier space $\ELP$ isomorphic to its dual? Is it isomorphic to a centralizer?
\end{prob*}

\end{document}